\newcommand{\ad}{\text{ad\,}}
\newcommand{\Supp}{\text{Supp}}
\newcommand{\bi}{\bar\imath}
\newcommand{\ba}{\begin{array}}
	\newcommand{\bal}{\begin{array}{l}}
		\newcommand{\be}{\begin{equation}}
			\newcommand{\beqa}{\begin{eqnarray}}
				\newcommand{\bl}{\begin{lem}}
					\newcommand{\bt}{\begin{teo}}
						\newcommand{\C}{\mathbb{F}}
						\newcommand{\ch}{\choose}
						\newcommand{\ea}{\end{array}}
					\newcommand{\ee}{\end{equation}}
				\newcommand{\eeqa}{\end{eqnarray}}
			\newcommand{\el}{\end{lem}}
		\newcommand{\et}{\end{teo}}
	\newcommand{\la}{\langle}
	\newcommand{\mc}{\mathcal}
	\newcommand{\N}{\mathbb{N}}
	\newcommand{\nc}{\normalcolor}
	\newcommand{\ra}{\rangle}
	\newcommand{\vnj}{\mathcal{V}^n_j}
	\newcommand{\Z}{\mathbb{Z}}
	\newtheorem{lem}{Lemma}
	\newtheorem{teo}{Theorem}
	\newtheorem{pro}{Proposition}
	\newtheorem{cor}{Corollary}
	\theoremstyle{definition}                 
	\newtheorem{defin}{Definition}            
\begin{document}
		
		\title{Computation of Normal Forms for Systems\\ with  Many Parameters}
		
		\author{Tatjana Petek$^{1,4}$ and Valery G.~Romanovski$^{1,2,3}$\footnote{Corresponding author, email: valerij.romanovskij@um.si} \\
			$^1${\it Faculty of Electrical Engineering and Computer Science,} \\ {\it University of Maribor,
				Koro\v ska cesta 46, SI-2000 Maribor, Slovenia}\\
			$^2${\it Center for Applied Mathematics and Theoretical Physics,}\\
			{\it Mladinska 3, SI-2000 Maribor, Slovenia}\\
			$^3${\it Faculty of Natural Science and Mathematics,}\\ {\it  University of Maribor,
				Koro\v ska cesta 160, SI-2000 Maribor, Slovenia}\\
			$^4${\it Institute of Mathematics, Physics and Mechanics,}\\
			{\it Jadranska 19, SI-1000 Ljubljana, Slovenia} 
		}
		\date{}

		\maketitle
		
		Mathematical Subject Classification 2010: 34C20, 37G05
		
		Keywords: Poincar\'e-Dulac normal form, Lie transform, Lie algebra of vector fields 
		
		\begin{abstract}{
There are two ways to compute Poincar\'e-Dulac normal forms of systems of ODEs. Under the original approach used by Poincar\'e  the normalizing transformation 
is explicitly computed.  On each step, the normalizing  procedure requires the  substitution
of  a polynomial to a series.   Under the other  approach, a normal form is computed using  Lie transformations. 
In this case,   the changes of coordinates are performed   as actions of  certain infinitesimal generators. 
In both cases, on each step the homological equation is solved in  the vector space of polynomial 
vector fields  $\vnj$
 where  each component of the vector field   is a
		homogeneous polynomial of degree $j$.
		We present the third way  of computing normal forms of polynomial systems 
				of ODEs where the coefficients of all terms are parameters.  Although we use   Lie transforms 
				the  homological equation is solved not in  $\vnj$ but in the vector 
				space  of polynomial vector fields where each component is a homogeneous polynomial in the 
				parameters of the system.  It is shown that the space of the parameters 
				is a kind of dual space and,  the computation of normal forms can be performed 
				in the space of parameters treated as the space of generalized vector fields. 
				The approach  provides a simple way to parallelize the  normal form computations opening the way 
				to compute  normal forms up to higher order than under previously known two approaches. }
		\end{abstract}

		\section{Introduction}

		The normal form theory of ordinary differential equations is a powerful tool for studying the qualitative behavior of nonlinear systems near equilibrium points and local integrability. The central idea of the  theory, which originates from the works of 
		Poincar\'e, Dulac and Lyapunov  \cite{P,Dul,Lya},   is to
		perform a change of coordinates or a succession of coordinate transformations, 
		to place the original system  into a form most amenable to study.
		
		There are many  books devoted to various aspects of the   normal form theory (see e.g. \cite{Bibikov,Bru, CDW,HY,Mur,Nay}). 
		Some discussions on the  recent progress can be found in the survey papers \cite{AFG,S}.
		
		Three main questions in the theory of normal form are to analyze the structure of normal form, compute it and 
		analyze the convergence of the obtained series. Our paper concerns only  algorithms for  computing  
		of normal forms  regardless of  their structure and convergence. 
		Usually,  computations of normal forms require
		huge amount of algebraic operations, so the algorithms used in normal form theory are  complex and computationally expensive.
		The complexity of the algorithms can make it challenging to compute normal forms for systems with a large number of variables.
		The complexity of 
		computations fast increases for equations dependent on parameters. For these reasons, 
		normal form computations   rarely can be performed by hand but are carried out using computer algebra systems. 
		
		There are two main approaches to computations of normal form. The classical one, used by Poincar\'e, Dulac and Lyapunov 
		requires substitution series into series. It appears that the most computationally efficient algorithms obtained using this 
		approach are given in \cite{Pei,TY}.  The other one is based on Lie transformations and was developed 
		starting from the works of Birkhoff \cite{Bir}, Steinberg \cite{Ste},  Chen \cite{Che} and others. Using this way a normalization is performed 
		as a sequence of linear transformations.

		Throughout this paper, fix $\mathbb{F}$ to be  either the field of all real numbers  or the field of all complex numbers. Let $
		\mathcal{V}^n$ be
		the  space of  vector fields $v : \mathbb{F}^n \to  \C^n$ 
		which coordinates are power series in $x_1, \dots , x_n$ (for brevity we will say "in $x$") vanishing at the origin, 
		which can be convergent or merely formal,   and  $\vnj$ be  
		the vector space of polynomial vector fields on $\C^n$, that is, vector
		fields $v : \C^n \to \C^n $  such that each component $v_i$, $i = 1, \dots , n$,  of $v$  is a
		homogeneous polynomial of degree $j$. Note that $
		\mathcal{V}^n$ and  $\vnj$ are vector spaces over $\C$ and $\mathcal{V}^n=\oplus_{j=0}^\infty\vnj$. 
		
		Consider the differential equation 
		\be\label{eq_u}
		\dot x=  u(a, x), 
		\ee
		where $a=(a_1, \dots, a_\ell ) $ is an $\ell$-tuple of parameters (regarded as element of $\C^\ell$), $x=(x_1,\dots, x_n)$,   $u(a,x)\in  \mathcal{V}^n$
		and all terms of $u(a,x)$ depend polynomially   on parameters   $a_1,\dots,a_\ell $ (on $a$ for brevity). 
		
		The  space  $\mathcal{V}^n$ can also be viewed as a module over the ring of power series in $x_1, \dots, x_n$.  It 
		has natural grading by the degree of polynomials, that is, equation \eqref{eq_u} can be written 
		as 
		\be \label{eq_u_h}
		\dot x= \sum_{j=1}^\infty u_j(a,x),
		\ee
		where  $ u_j(a,x)$ is a homogeneous polynomial of degree $j$ in $x$, that is, $ u_j(a,x) \in \vnj$. As a rule, the normalization 
		of system  \eqref{eq_u} is performed according to this grading -- polynomials $u_j$ are changed into  a more
		amendable   
		form  for studying for $j=1,2,3,\dots$, consequently. 
		
		In this paper, we use another approach which is  based on the grading of the power series on the right hand side of 
		\eqref{eq_u} by the degree of polynomials  viewed  as polynomials  in the parameters of the system.  
		Indeed, since by our assumption each term of  $u(a,x)$ depends polynomially  on parameters $a$,
		each component of the vector field  $u(a,x)$ is a power series  in $a_1, \dots, a_\ell, x_1,\dots, x_n$, so     
		we can write \eqref{eq_u} in the form 
		\be \label{eq_a}
		\dot x=  \sum_{s=1}^\infty \bar u_s(a,x),
		\ee
		where $\bar u_s$ is a homogeneous polynomial of degree $s$  in the parameters  $a_1, \dots, a_\ell$ of the 
		system. 
		
		We will work with formal vector fields using  the grading of the  formal series 
		by polynomials which are  homogeneous  with respect to the parameters $a$ of $u(a,x)$ (generally, they are \emph{ not homogeneous} with respect to the variable $x$).  
		This idea looks unusual at first glance, but it   makes perfect  sense  since  when a vector field depends 
		on  many parameters, then it becomes more important 
		to have a tool to handle the monomials  with respect to  the parameters rather than with  
		respect to  the unknown functions $x_i$.
		
		However, the module  of formal  vector fields   with components from $\C[[a,x]]$   is too large. 
		We define  a submodule which is graded using the structure of the monomials on the right-hand side of 
		\eqref{eq_a}.  As a result, we will obtain an algorithm for computing  normal forms which has two advantages 
		with respect to the traditional ones:
		\begin{itemize}
			\item[--]  It allows  performing the parallelization  of normal forms computations. Namely, for equation \eqref{eq_u}, the terms of powers 
			series in a normal  form have the form    $p(a) x^r$, where $x^r$ is a resonant monomial and $p(a)=\sum_{\alpha\in Supp(p)} 
			p_\alpha a^\alpha$ is a polynomial in $a$. Using traditional methods, one can compute only the whole term  $p(a) x^r$.
			Using our technique, it is possible to compute any term $p_\alpha a^\alpha$ of $p(a)$ without computing 
			the whole polynomial $p(a)$.   Since the computation of normal form 
			using computer algebra systems is extremely time and memory consuming, this parallelization should allow 
			computing normal forms up to higher order than using  the traditional approaches. 
			
			\item[--] The obtained algorithm is more convenient for computations by hand than the ones known before because 
			it does not require the computation of derivatives and collection of similar terms, since only arithmetic 
			operations with numbers are used.
		\end{itemize}
		
		As we will see, the  space of parameters of the differential equations is a kind of dual space and all computations
		can be performed in this dual space.  
		
		The paper is organized as follows.  In the next section, we present some basics of the normal forms. 
		In Section 3   a module which we use for our normal form  computations is defined and 
		our  normalization algorithm is presented.  In Section 4 we introduce the notion of generalized  vector fields,
		reformulate the algorithm  in terms of such vector fields and give an example of the normal form computation.

		\section{Preliminaries}
		
		Consider the autonomous differential equation  (we can also say  $n$-dimensional autonomous system) 
		\be
		\label{ndim}
		\dot x =   f(x), \hspace{3mm} x \in  \mathbb{F}^n,
		\ee
		where 
		$$
		f(x) =Ax + \sum_{j=2}^\infty
		f_j(x),
		$$
		$f_j(x)$   is an $n$-dimensional vector valued homogeneous polynomial of degree $j$, that is,   
		$f_j(x) \in \mathcal{V}^n_j$ and $f(x)\in \mathcal{V}^n$. In this paper, we assume that 
		$A$ is a \emph{diagonal matrix} with the eigenvalues $\lambda_1, \dots, \lambda_n$,
		$x=(x_1, \dots, x_n)^T.$  
		
		Let 
		$$\lambda=(\lambda_1, \dots, \lambda_n)^T\in \C^n$$ be the column vector   of eigenvalues of $A$. Set $\N_0 = \N\cup \{0\}$.  For $\alpha=(\alpha_1,\dots,\alpha_n) \in \N_0^n$ we denote $\langle\lambda,\alpha\rangle=\sum_{i=1}^n \alpha_i \lambda_i$, 
		$|\alpha|=\alpha_1+\dots+\alpha_n$ and $x^\alpha= x_1^{\alpha_1}  x_2^{\alpha_2} \cdots x_n^{\alpha_n}$.
		Throughout this paper {\it $x$ is a column vector} and  {\it $x^\alpha$ is a monomial. }
		
		It is well known that  any   formal invertible  change of coordinates of the form 
		\be
		\label{yhy}
		x = y+ \tilde  h(y)= y +\sum_{j=2}^\infty \tilde h_j(y), 
		\ee
		with  $\tilde h_{j} \in 
		\mathcal{V}^n_{j}$, $j=2,3, \dots,$  
		brings \eqref{ndim} to a system of a similar form, 
		\be
		\label{linearni}
		\dot y = A y+\tilde g(y),
		\ee
		where 
		$$\tilde g(y)= \sum_{j=2}^\infty \tilde g_j(y), \qquad\tilde  g_j(y)\in \vnj,  \qquad j=2,3, \dots .
		$$ 
		
		\nc
		
		Let  $e_k$ be  the $n$--dimensional column vector with its $k$-th component equal to $1$ and the others all equal to zero.
		It is said that the monomial 
		$x^\alpha$  ($|\alpha|>1$) in $  e_k^T f(x) $, $k=1,\dots, n$ is a \emph{resonant} monomial if $k$ and $\alpha$ satisfy
		\be \label{res_c_e} 
		\langle\lambda,\alpha\rangle- \lambda_k=0.
		\ee
		A term of $f(x)$  is called \emph{resonant} if it is a   resonant monomial multiplied by a scalar from $\C$ (or a parameter) and  \emph{nonresonant} otherwise.
		Similarly, the monomial  $y^\alpha$  in $  e_k^T \tilde g(y) $  or  in $  e_k^T \tilde h(y) $
		is resonant if \eqref{res_c_e} holds.  
		
		\begin{defin} \label{def_nf}
			We say that system \eqref{linearni} is in the  \emph{Poincar\'e--Dulac normal form}  (or, simply in the normal form)  
			if $\tilde g(y)$ contains only resonant  terms. 
		\end{defin}
		
		By  Poincar\'e--Dulac theorem any system \eqref{ndim} can be brought to a normal form by a substitution \eqref{yhy} \cite{P,Dul}.
		Transformation \eqref{yhy}  which brings system \eqref{ndim} to a  Poincar\'e--Dulac normal form  is called  a  normalization or a normalizing transformation. In general, the normalization of system \eqref{linearni} is not unique. 
		However, 
		the normalization containing only nonresonant terms is unique.


		For $ v(x)\in \mathcal{V}^n $ we denote by $Dv(x)$ the $n\times n$ matrix of partial derivatives of $v(x)$. 
		The linear operator $\pounds : \mathcal{V}^n\to \mathcal{V}^n$  defined by 
		\be \label{ho}
		(\pounds v)(x) =Dv(x) Ax-A v(x)
		\ee
		is called the \emph{homological operator}. It is easy to see that $\pounds  (\vnj) \subseteq \vnj$,   so let
		$$
		\pounds_j : \vnj \to \vnj 
		$$
		be the restriction of $\pounds$ to $\vnj$. It is known (see e.g. \cite{Bibikov,Mur})  that 
		the $ \pounds_j$  is semisimple with the eigenvalues 
		$\beta_{\alpha,i}=\la \alpha, \lambda\ra -\lambda_i$ and (basis) eigenvectors $p_{i,\alpha}=e_i x^\alpha$,
		where $|\alpha|=j$ and $ i=1,\dots, n$. The kernel of $\pounds_j$ is then spanned by all monomials $e_i x^\alpha$, where $i$ and $\alpha$ satisfy $\beta_{\alpha,i}=0
		$.  Then we can decompose $\vnj$  as  
		$$
		\vnj={\rm im} \pounds_j \oplus \ker \pounds_j.
		$$
		According to this decomposition, we can write every $v_j \in \vnj$ uniquely as
		\be \label{eq:L}
		v_j = \pounds_j  \tilde h_j + \tilde g_j
		\ee 
		for some $\tilde h_j \in \vnj$, i.e. $ \pounds  \tilde h_j \in {\rm im} \pounds_j$ and $\tilde g_j \in \ker \pounds_j$ is the (natural) projection of $v_j$ onto $\ker \pounds_j$. In other words, in $\tilde g_j$ can only appear terms $e_i x^\alpha$ with $\la \alpha, \lambda\ra -\lambda_i=0$, thus being resonant terms.

		Assume that equation \eqref{ndim} has already been normalized to  order $j-1$, so \eqref{linearni} is 
		a normal form of \eqref{ndim} up to order $j-1$.  
		Then to perform the normalization to order $j$, one solves the equation 
		\be \label{comp_nf}
		\pounds_j \tilde h_j= f_j - \tilde g_j,
		\ee
		known as the homological equation, for  $\tilde h_j$ and   $\tilde g_j$ 
		(the solution is unique, providing we choose all resonant terms of $\tilde h_j$ to be zero). 
		With this  $\tilde g_j$  \eqref{linearni} is in the normal form to order $j$, 
		and to perform the next step of the normalization, one needs to do in \eqref{ndim}
		the transformation 
		$$
		x \to x+\tilde h_j(x) 
		$$
		and re-expand the series $f(x) $ on the right hand of \eqref{ndim} up to order $j+1$.
		
		The computational procedure described above  is simple, however, it requires substitutions of  polynomials 
		into series
		and, therefore, it is very  laborious   from the computational point of view. 
		
		Another setting for computing  the normal form is using the Lie brackets. 
		For two vector fields $v(x)$ and $w(x)$ in $\mathcal{V}^n$  
		$$[w,v]=D(v)w-D(w)v$$ 
		is the  Lie bracket.   For a given $w(x)\in \mathcal{V}^n$, the operator 
		$$
		\text{ad}\, w(x) =[w(x),\cdot] \, : \, {\mathcal{V}^n} \to   {\mathcal{V}^n}
		$$
		is the so-called \textit{adjoint} map. 
		In particular, $\pounds = \ad Ax$ and
		equation  \eqref{linearni} is in the normal form if and only if 
		$$
		(\ad Ax)\, \tilde g_k = 0,   { \ \rm  for\  all \ }  k=2, 3, \dots.
		$$ 
		
		Let 
		\be \label{s_b}
		\frac{dx}{ds}= b(x),  \qquad  b(x) \in   {\mathcal{V}^n}, \qquad
		b(0)=0,
		\ee
		and let $\psi_s(x)$ be the flow of \eqref{s_b} satisfying $\psi_0 (x)=x$.
		By the Theorem on Lie series for vector fields (see \cite[Theorem 4.4.8]{Mur} or \cite[p. 202]{Oli}) 
		after the transformation 
		\be \label{trx}
		x \to \psi_s(x)
		\ee
		the right-hand side of equation \eqref{ndim} is changed to 
		\be \label{LT}
		\psi_s'(x)^{-1}f(\psi_s(x))=f(x)+s (\ad b(x)) f(x)+ \frac{s^2}{2}  (\ad b(x))^2 f(x)+\dots.
		\ee
		Thus, if equation \eqref{ndim} is in the normal form form up to order $j-1$, to perform 
		the next step of the normalization, one solves the homological equation \eqref{comp_nf} for $\tilde h_j$ 
		and $\tilde g_j$  and 
		performs in \eqref{ndim} the transformation \eqref{trx} with $s=1$ and $\psi_s(x)$ being the flow of 
		$$
		\frac{dx}{ds}=\tilde h_j(x).
		$$
		
		Denoting 
		$\psi_1(x)$ by  $\exp(\tilde h_j)$ we can write 
		the normalization to order $m$  as   
		$$x=H_{m}(y) \circ \cdots \circ H_2 (y),$$
		where    $H_k=\exp({\tilde h_k}) $, $k=2, \dots, m$.
		
		An algorithmic description of this normalizing procedure can be found, for instance, in \cite{San,Wal}.
		


		\section{A new algorithm for computing  of normal forms}
		
		\subsection{A grading of the formal power  series module}
		
		Assume that the  terms of the function $f(x)$ in \eqref{ndim} depends polynomially  on parameters $a_1, \dots, a_\ell$
		and let   $a=(a_1, \dots, a_\ell)$,
		that is, the  terms of $f(x)=f(a,x)$ are of the form 
		\be \label{fax}
		f^{(i)}_{(\mu_1,\dots,\mu_\ell,\beta_1,\dots,\beta_n)}  a_1^{\mu_1}\cdots a_\ell^{\mu_\ell} x_1^{\beta_1}\cdots   x_n^{\beta_n} \qquad (i=1, \dots, n).
		\ee  
		Then the coefficients of the resonant monomials in the normal form are polynomials in parameters $a_1, \dots a_\ell$.

		As an example, we  consider     
		the following quite arbitrarily  chosen  differential equation 
		\be \label{ex}
		\begin{aligned}
			\dot x_1 = & \phantom{ -} x_1 + a^{(1)}_{10} x_1^2 +  a^{(1)}_{01} x_1 x_2 +  a^{(1)}_{-13} x_2^3=x_1(1 + a^{(1)}_{10} x_1 +  a^{(1)}_{01}  x_2 +  a^{(1)}_{-13} x_1^{-1} x_2^3),\\
			\dot x_2= &-x_2 +   a^{(2)}_{10} x_1 x_2  +  a^{(2)}_{01}  x_2^2 +  a^{(2)}_{02} x_2^3=x_2(-1 +   a^{(2)}_{10} x_1  +  a^{(2)}_{01}  x_2 +  a^{(2)}_{02} x_2^2).
		\end{aligned}
		\ee
		Let us remark that there is a  deep reason for the given notation of parameters which will be very important in the sequel.

		Computing  the normal form of \eqref{ex} up to order 5  
		we obtain 
		\be \label{nfg35}
		\dot x={\rm diag}(1,-1) x +g_2(x)+g_3(x)+g_4(x) +g_5(x), 
		\ee
		where $x=(x_1,x_2)^T$, $g_2(x)=g_4(x)=0$ and 
		\begin{align*}
			g_3(x) &=((- a^{(1)}_{10}  a^{(1)}_{01}  +  a^{(1)}_{01}  a^{(2)}_{10}) x_1^2 x_2, (- a^{(1)}_{01}  a^{(2)}_{10}  + 
			a^{(2)}_{01}  a^{(2)}_{10}) x_1 x_2^2)^T \\
			&=(x_1(- a^{(1)}_{01} a^{(1)}_{10}  +  a^{(1)}_{01}  a^{(2)}_{10}) x_1 x_2,\ x_2(- a^{(1)}_{01}  a^{(2)}_{10}  + 
			a^{(2)}_{01}  a^{(2)}_{10}) x_1 x_2)^T, \\
			g_5(x) &= (( (a^{(1)}_{01})^2 a^{(1)}_{10}  a^{(2)}_{10} +  a^{(1)}_{01} a^{(1)}_{10}  a^{(2)}_{01}  a^{(2)}_{10} - 
			2  (a^{(1)}_{01})^2  (a^{(2)}_{10})^2) x_1^3 x_2^2  , \\
			& \phantom{ -}(- a^{(1)}_{01} a^{(1)}_{10}  a^{(2)}_{01}  a^{(2)}_{10} + a^{(1)}_{10}  a^{(2)}_{02}  a^{(2)}_{10} + 
			2  (a^{(1)}_{01})^2  (a^{(2)}_{10})^2 -  a^{(1)}_{01}  a^{(2)}_{01}  (a^{(2)}_{10})^2 + 2  a^{(2)}_{02}  (a^{(2)}_{10})^2) x_1^2 x_2^3  )^T\\
			&=(x_1( (a^{(1)}_{01})^2 a^{(1)}_{10}  a^{(2)}_{10} +  a^{(1)}_{01} a^{(1)}_{10}  a^{(2)}_{01}  a^{(2)}_{10} - 
			2  (a^{(1)}_{01})^2  (a^{(2)}_{10})^2)x_1^2x_2^2, \\
			& \phantom{ -} x_2(- a^{(1)}_{01} a^{(1)}_{10}  a^{(2)}_{01}  a^{(2)}_{10} + a^{(1)}_{10}  a^{(2)}_{02}  a^{(2)}_{10} + 
			2  (a^{(1)}_{01})^2  (a^{(2)}_{10})^2 -  a^{(1)}_{01}  a^{(2)}_{01}  (a^{(2)}_{10})^2 + 2  a^{(2)}_{02}  (a^{(2)}_{10})^2)x_1^2x_2^2)^T.
		\end{align*}

		We see from the example that the expression for the normal form is "dominated" by 
		parameters in the sense that there are 12 monomials   involved parameters, whereas 
		for the phase variables we have just four  monomials $ x_1^2 x_2$, $x_1 x_2^2$,  $x_1^3 x_2^2$,   $x_1^2 x_2^3$.

		Consider polynomial equations   \eqref{ndim}
		written in the form 
		\be \label{sys_k}
		\dot x_k=  \lambda_k x_k+ x_k \sum_{{\bi} \in \Omega_k}   a^{(k)}_{\bi} x^{\bi}  \qquad (k=1,\dots, n),
		\ee
		where $\Omega_k$,  $k=1,\dots,n$, is a fixed ordered set of multi-indices ($n$-tuples) $\bi=(i_1,\dots,i_n)$, 
		whose $k$-th entry is from $\N_{-1}=\{-1\} \cup \N_0$ and all other entries are from $\N_0$. 
		The symbols  $ a_{\bi}^{(k)}$ are parameters of the family \eqref{sys_k}. Let $\Omega=\Omega_1\cup \Omega_2 \cup \dots \cup \Omega_n $.
		If $ |\Omega_k  |  $ is the number of elements of $\Omega_k$, then $\ell = |\Omega_1  |+\dots +  |\Omega_n  |$ 
		is the number of parameters   $a^{(k)}_{\bi}$ in \eqref{sys_k}.
		We will write \eqref{sys_k} in a short form as the equation 
		\be 
		\label{sys}\dot x=  A x + F(a,x),\ee
		where 
		$$
		F(a,x)=( x_1 \sum_{{\bi} \in \Omega_1} a^{(1)}_{\bi} x^{\bi}, \dots,  x_n \sum_{{\bi} \in \Omega_n} a^{(n)}_{\bi} x^{\bi}   )^T
		$$
		and $A\in \C^{n,n}$ is a diagonal matrix.

		Let 
		$\tilde L$  be the $ \ell \times n$ matrix
		which rows are all   $n$-tuples $ \bi$  from $\Omega_1,\Omega_2,\ldots,\Omega_n$ successively,   
		(that is, the rows of $\tilde L$ are the 
		subscripts $\bi$  
		of the parameters $ a^{(k)}_{\bi}$ of  \eqref{sys_k} written as row vectors).
		For  $\nu =(\nu_1,\dots, \nu_\ell) \in \N^\ell_0$ we denote by $L$  the  map
		\be \label{Lnu}
		L(\nu)=\nu  \tilde L.
		\ee
 
		Then $L(\nu)$ is the row  vector $L(\nu)=(L_1(\nu),\dots,L_n(\nu)$. Let us add that $L$ is additive map on the monoid $\N_0^\ell$, hence $L(0)=0$ and $L(j \mu)=j L(\mu)$ for any $j\in \N_0$ and  $\mu \in \N_0^\ell$.

		
		Denote by ${ a}$ the ordered (according to the order in $\Omega_k$, $k=1,2,\dots,n$) $\ell$-tuple of parameters of  equation  \eqref{sys},
		\be \label{a_vec}
		{ a}=(a^{(1)}_{\bi^{(1)}},  a^{(1)}_{\bi^{(2)}}, \dots, a^{(n)}_{\bi^{(\ell)}})
		\ee 
		and by $\C[{ a}]$ the ring of polynomials in variables $a^{(1)}_{\bi^{(1)}}, \dots,  a^{(n)}_{\bi^{(\ell)}}$
		over $\C$.
		Any monomial in parameters of system \eqref{sys} has   the form
		\be \label{nu}  a^\nu =  
		(a^{(1)}_{\bi^{(1)}})^{\nu_1}    ( a^{(1)}_{\bi^{(2)}})^{\nu_2}   \cdots (a^{(n)}_{\bi^{(\ell)}})^{\nu_\ell},
		\ee
		so, $ a^\nu$ is the abbreviation of the expression on the right hand side of \eqref{nu}. We also define the L-preimage of $\Omega$ to be
		\be \label{omega}
		\omega= L^{-1}(\Omega)
		\ee
		being non-empty since $L(e_1)=\bi^{(1)}\in\Omega$.
		
		In the case of system \eqref{ex}  we have $\Omega_1=((1,0,),(0,1),(-1,3))$, $\Omega_2=((0,2),(1,0,),(0,1))$ and $\ell =6$. Then
		$$
		\tilde L=\left(
		\begin{matrix}
			1 & 0 & -1 & 0 & 1 & 0 \\
			0 & 1 & 3 &2 & 0 & 1
		\end{matrix} 
		\right)^T
		$$
		and $
		a=(a^{(1)}_{10},   a^{(1)}_{01},   a^{(1)}_{-13},    a^{(2)}_{02},     a^{(2)}_{10},    a^{(2)}_{01})  $. For any $(\nu_1,\nu_2,\nu_3,\nu_4,\nu_5,\nu_6)\in \N_0^6$,
		$$ 
		a^\nu=
		(a^{(1)}_{10})^{\nu_1}  (a^{(1)}_{01})^{\nu_2}  (a^{(1)}_{-13})^{\nu_3}   (a^{(2)}_{02})^{\nu_4}    (a^{(2)}_{10})^{\nu_5}   (a^{(2)}_{01})^{\nu_6}   
		$$
		so, 
		$$L(\nu)= (\nu_1- \nu_3+ \nu_5, \nu_2+3 \nu_3+ 2\nu_4+ \nu_6).$$
		We see that the range of $L$, in this example, is equal to $\mathrm{im}L=\Z\times \N_0$. 
		
		In the general case,  $\mathrm{im}L$ is an additive submonoid of $\Z^n$. Moreover, by our assumptions in each column of $\tilde{L}$ at most one $-1$ can appear, and $|L(\mu)|\ge 1$ for every $\mu\in  \N_{0}^n$. Therefore, $L(\mu)=0\in \N_{-1}^n$ implies that $\mu=0$.

		\begin{defin}\label{mpoly}
			For $ m  \in  \mathrm{im}L\subseteq \Z^n  $, a  (Laurent)  polynomial $p(a)$,
			$p = \sum_{\nu\in \Supp(p)}p^{(\nu)}a^\nu$, is an $ m$-\emph{polynomial} 
			if for every  $\nu \in \Supp(p) \subset \N^\ell_{0} $, 
			$L(\nu) =  m$. The zero polynomial is said to be an $ m$-polynomial for every  $ m  \in \Z^n $.
		\end{defin}
		\textit{Remark.} Observe that by our definition, in  \eqref{ex}  every parameter   $a^{(j)}_{rs}$ 
		(that is, $a^{(1)}_{10},   a^{(1)}_{01},...$ )
		is an $(r,s)$-polynomial.
		Moreover,  the monomials  $a^{(j)}_{rs}$ have the following property: if  $a^{(j)}_{rs}$ is the 
		$q$-th element of \eqref{a_vec},  
		then $a^{(j)}_{rs}=a^{e_q}$,  $L(e_q)=(r,s)$ and  $a^{(j)}_{rs}x_1^rx_2^s=a^{e_q}x^{L(e_q)}$ ($e_q$ is the $q$-th unit basis vector of $\mathbb{F}^\ell$).    A very important observation is that all monomials $g_3$ and $g_5$ of \eqref{nfg35}, after $x_1$ and $x_2$ being factored out,  appear in the form  $a^\nu x^{L(\nu)}$.
		It is one of the cornerstones of our investigation.

		As it has just been used, $e_q \in \N_0^\ell$  will be considered as a row vector with 1 in the $q$-th position and zeroes elsewhere, and a column vector (being defined before) if being the standard basis vector in $\C^n$. 
		
		For a given $m\in\Z^n$ 
		let $ R_{ m}$  be the subset of $\C[a]$ consisting of all $ m $-polynomials. Denote by   $R$  the direct sum of   $ R_{ m}$,
		$$
		R=\oplus_{ m\in \Z^n }R_{ m}. 
		$$
		Since 
		$$
		R_{ m_1}R_{ m_2}\subseteq R_{ m_1+ m_2},
		$$
		$R$ is a graded ring.  Clearly, by the remark preceding Definiton \ref{mpoly} we have   $  R_{ 0}=\C$. Moreover, $R_{ m}$ as well as $R$ are vector spaces over $\C$ for the usual addition and multiplication by numbers from $\C$. 
		

		Let 
		$$\mathcal{M}=\{(m_1,\dots, m_n)\in \N_{-1}^n: |m|\ge 0, m\in \mathrm{im} L, m_j=-1 \text{ for at most one } j\}.$$

		Let  $M_j$ be  the space of formal vector fields of the form 
		$$
		\dot x_j=   x_j \sum_{\substack{ m\in \mathcal{M}, \\ m_i\ge 0 {\ \rm if\ } i\ne j  }} p_j^{(m)}(a) x^m   \qquad (j=1,\dots, n)
		$$
		where $ p_j^{(m)}(a)\in R_m$ for    all $j=1,\dots, n$.

		Further, let  
		$$M=M_1\times \dots \times M_n.$$
		
		Denoting $p^{(m)}(a)=\sum_{j=1}^n e_j p_j^{(m)}(a)$,  for every member of $M$ there exists a finite or infinite set $\mc L \subset \mc M$ such that it can be written as 
		\be \label{eq:M}
		\sum_{m\in {\mc L}} x \odot  p^{(m)}(a)x^m 
		\ee 
		with $\odot$ denoting the  Hadamard multiplication. It can be easily seen that $M$ is an additive group and moreover, $M$ is a module over the ring  $(\C^n,+,\odot)$ with the multiplicative unit $\bar{1}=\sum_{i=1}^n e_i \in \C^n$. Obviously, $M$ is also a vector space over $\C$.

		We note that the grading introduced in this section was implicitly used in \cite{Rom93} in order to derive an efficient 
		algorithm for computing focus (Lyapunov) quantities of planar polynomial vector fields. 
		
		\subsection{Lie brackets in $M$}
		
		We  consider elements of $M$ as (formal) vector fields, so we can compute  Lie brackets of elements of $M$.
		
		\begin{lem}
			Any element ${\bf \Theta}= (\Theta^{(1)},\dots , \Theta^{(n)})\in M $ can be written in the form
			\be \label{Theta}
			{\bf \Theta} =\sum_{\mu\in \omega} (\theta_\mu\odot x)  a^\mu x^{L(\mu)},
			\ee
			where
			$  \theta_\mu   =(  \theta_{\mu,1}  ,\dots,  \theta_{\mu,n} )$,
			$\omega$ is a finite or infinite subset of $\N^\ell_0$ such that if $\mu \in \omega$ then 
			$L(\mu)\in {\mc M}$. Additionally,  if $L_j(\mu)=-1$, then $\theta_\mu = \theta_{\mu,j} e_j$.
		\end{lem}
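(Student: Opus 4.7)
The plan is to unpack the definitions of $M_j$ and of $m$-polynomial one after the other, re-index the resulting triple sum, and finally verify the support condition when $L_j(\mu)=-1$.

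First I would fix $\mathbf{\Theta}=(\Theta^{(1)},\dots,\Theta^{(n)})\in M$ and expand each component according to the definition of $M_j$: there exist $m$-polynomials $p_j^{(m)}(a)\in R_m$, indexed by those $m\in\mathcal{M}$ with $m_i\ge 0$ for $i\ne j$, such that
$$\Theta^{(j)}=x_j\sum_{m}p_j^{(m)}(a)\,x^m.$$
Next, using Definition \ref{mpoly}, I would write each $p_j^{(m)}(a)=\sum_{\nu:\,L(\nu)=m} p_j^{(m,\nu)}\,a^\nu$, where the sum ranges over $\nu\in\mathbb{N}_0^\ell$ with $L(\nu)=m$. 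Substituting and exchanging the (finite at each multidegree) order of summation collapses the double sum $\sum_{m}\sum_{\nu:\,L(\nu)=m}$ into a single sum over $\nu\in L^{-1}(\mathcal{M})\cap\mathbb{N}_0^\ell$, which I take as the candidate index set $\omega$. Setting $\theta_{\mu,j}:=p_j^{(L(\mu),\mu)}$ and $\theta_\mu:=(\theta_{\mu,1},\dots,\theta_{\mu,n})$, each component becomes $\Theta^{(j)}=\sum_{\mu\in\omega}\theta_{\mu,j}\,x_j\,a^\mu x^{L(\mu)}$. Since $(\theta_\mu\odot x)_j=\theta_{\mu,j}x_j$ by definition of the Hadamard product, stacking the components yields the representation \eqref{Theta}.

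The one genuinely non-trivial point is the last claim: if $L_j(\mu)=-1$ for some $j$, then $\theta_{\mu,i}=0$ for every $i\ne j$. This is where I would use the defining restriction on $M_i$. For $i\ne j$, the expansion of $\Theta^{(i)}$ only ranges over $m\in\mathcal{M}$ with $m_k\ge 0$ for all $k\ne i$; in particular $m_j\ge 0$ is required. Since $L(\mu)$ has $j$-th coordinate $-1<0$, no term $p_i^{(L(\mu))}(a)$ contributes to $\Theta^{(i)}$, so $\theta_{\mu,i}=p_i^{(L(\mu),\mu)}=0$. Hence $\theta_\mu$ is supported solely on the $j$-th coordinate, i.e.\ $\theta_\mu=\theta_{\mu,j}e_j$, as required. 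I should also note that the definition of $\mathcal{M}$ allows $-1$ in at most one slot, so this case is unambiguous.

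The routine parts are the index manipulations and the re-indexing, which present no obstacle beyond bookkeeping. The main step where care is needed is the final support statement: one must keep straight that the constraint $m_k\ge 0$ for $k\ne j$ lives inside the definition of $M_j$ (not of $\mathcal{M}$), and that it is precisely this asymmetric constraint that forces $\theta_\mu$ to be a scalar multiple of $e_j$ whenever $L(\mu)$ has a negative entry in position $j$.
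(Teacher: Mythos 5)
Your proof is correct and follows essentially the same route as the paper: both arguments simply unpack the definitions of $M_j$ and of an $m$-polynomial, re-index the double sum over $m$ and over $\mu$ with $L(\mu)=m$ into a single sum over $\mu$, and read off the support condition. Your explicit justification of the final claim (that $L_j(\mu)=-1$ forces $\theta_\mu=\theta_{\mu,j}e_j$ because the constraint $m_i\ge 0$ for $i\ne j$ sits in the definition of $M_j$) is in fact more detailed than the paper's one-line remark, but it is the same underlying observation.
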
  
		\begin{proof}
			Every  ${\bf \Theta} \in M$ can be represented in the form \eqref{eq:M}. We rewrite the polynomials $p^{(m)}(a)$, applying that each component $p^{(m)}_j(a)$ is an $m$-polynomial, as
			$$p^{(m)}(a)= \sum_{j=1}^n p_j^{(m)}(a) e_j=\sum_{j=1}^n\sum_{\mu:L(\mu)=m}\theta_{\mu,j} e_j a^\mu=\sum_{\mu:L(\mu)=m}\theta_\mu a^\mu$$
			where  	$\theta_\mu=\sum_{j=1}^n\theta_{\mu,j}e_j$. It follows that
			$${\bf \Theta}= \sum_{m\in \mathcal{L}} x\odot \sum_{\mu:L(\mu)=m}\theta_\mu a^\mu x^m= \sum_{\mu:L(\mu)\in \mathcal{L}} (  \theta_\mu\odot x)a^\mu x^{L(\mu)}$$
			where  $\omega=\{\mu: L(\mu)\in \mathcal{L}\subseteq \mc M \}$.
			
			The last assertion follows by the definition of the set $\mc M$.
		\end{proof}
		
		As every ${\bf \Theta} \in M$ is a (formal) vector field,  we can compute Lie brackets of 
		such vector fields:   
		$$[{\bf \Theta},{\bf \Phi}]:=(D{\bf \Phi}) {\bf \Theta} -  (D {\bf \Theta}) {\bf \Phi}$$
		for any ${\bf \Theta},{\bf \Phi}\in M$.\nc 
		
		\begin{lem}\label{lem_tpm}
			If ${\bf \Theta}=    (\theta_\mu \odot x)  a^\mu x^{L(\mu)}  $ and 
			$ {\bf \Phi} =  (\phi_\nu \odot  x)  a^\nu x^{L(\nu)}  $, 
			where $\mu, \nu \in \N_0^\ell$, $\theta_\mu = (  \theta_1 , \dots,  \theta_n  )^T $ 
			and   $\phi_\nu = (  \phi_1  ,\dots,  \phi_n  )^T$,   then 
			\be\label{tpm} 
			[{\bf \Theta},{\bf \Phi}]= 
			( \alpha_{\mu +\nu}\odot x)a^{\mu+\nu}  x^{L(\mu+\nu)}\in M, 
			\ee
			with 
			\be \label{alphamunu}
			\alpha_{\mu +\nu}=\la L(\nu) ,\theta_\mu \ra { \phi_\nu}    -    \la L(\mu) , {\phi_\nu}\ra  \theta_\mu.
			\ee
		\end{lem}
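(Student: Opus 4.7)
The plan is to prove the identity by a direct coordinate computation, exploiting that $a$ is treated as a parameter (so $D$ differentiates only with respect to $x$) and that each vector field has the rank-one form $(\theta_\mu \odot x)\, a^\mu x^{L(\mu)}$. Writing the $i$-th component of $\mathbf{\Theta}$ as the scalar $\theta_{\mu,i}\, x_i\, a^\mu x^{L(\mu)}$, and similarly for $\mathbf{\Phi}$, the task is to compute $(D\mathbf{\Theta})\mathbf{\Phi}$ and $(D\mathbf{\Phi})\mathbf{\Theta}$ component-wise and take the difference.

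For the main computation, I apply the product rule to $\partial_{x_j}\!\bigl(x_i\, x^{L(\mu)}\bigr)$: when $j=i$ this produces a factor $(L(\mu)_i+1)\,x^{L(\mu)}$, and when $j\neq i$ it produces $L(\mu)_j \,x_i\, x^{L(\mu)}/x_j$. Multiplying by $\mathbf{\Phi}_j = \phi_{\nu,j}\, x_j\, a^\nu x^{L(\nu)}$ and summing over $j$, every term collects the common prefactor $x_i\, a^{\mu+\nu}\, x^{L(\mu)+L(\nu)}$, and the scalar coefficient of the $i$-th component of $(D\mathbf{\Theta})\mathbf{\Phi}$ simplifies to $\theta_{\mu,i}\bigl(\phi_{\nu,i} + \langle L(\mu), \phi_\nu\rangle\bigr)$. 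Swapping $\mathbf{\Theta}$ and $\mathbf{\Phi}$ gives the analogous expression for $(D\mathbf{\Phi})\mathbf{\Theta}$; on subtracting, the symmetric diagonal terms $\theta_{\mu,i}\phi_{\nu,i}$ cancel, yielding
\[
\alpha_{\mu+\nu,i} = \langle L(\nu), \theta_\mu\rangle\,\phi_{\nu,i} - \langle L(\mu), \phi_\nu\rangle\,\theta_{\mu,i},
\]
which is exactly \eqref{alphamunu}. Additivity of $L$ (recorded just after \eqref{Lnu}) converts the exponent $L(\mu)+L(\nu)$ to $L(\mu+\nu)$, giving \eqref{tpm}.

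The step I expect to require the most care is the closure assertion $[\mathbf{\Theta},\mathbf{\Phi}]\in M$, since $L(\mu+\nu)$ need not automatically lie in $\mathcal{M}$. The critical case is when some coordinate of $L(\mu)$ and the same coordinate of $L(\nu)$ both equal $-1$; here the structural constraint from the preceding lemma forces $\theta_\mu = \theta_{\mu,k} e_k$ and $\phi_\nu = \phi_{\nu,k} e_k$, and a substitution into the formula for $\alpha_{\mu+\nu,k}$ yields $\theta_{\mu,k}\phi_{\nu,k}\bigl(L(\nu)_k - L(\mu)_k\bigr)=0$ while all other components vanish trivially, so the bracket is identically zero. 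In every other case either $L(\mu+\nu)\in\mathcal{M}$ directly (negative coordinates appear in disjoint slots and sum with nonnegative entries), or the offending component coefficient of $\alpha_{\mu+\nu}$ vanishes because the corresponding entry of $\theta_\mu$ or $\phi_\nu$ is zero, or because a factor such as $L(\nu)_k$ is zero. Assembling these cases shows the result always lies in $M$, completing the proof.
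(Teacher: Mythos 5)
Your proof is correct and follows essentially the same route as the paper's: a direct computation of $(D\mathbf{\Theta})\mathbf{\Phi}$ and $(D\mathbf{\Phi})\mathbf{\Theta}$ (the paper packages the product rule via the Hadamard-product identity $D(x\odot f)=\mathrm{diag}(f)+\mathrm{diag}(x)Df$, you do it componentwise, but the cancellation of the diagonal terms $\theta_{\mu,i}\phi_{\nu,i}$ and the resulting coefficient are identical), followed by the same two-case analysis for membership in $M$ (both entries $-1$ in the same slot, versus $-1$'s in distinct slots), with the bracket vanishing in each problematic case for the same reasons.
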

		\begin{proof}
			We recall that  for $f: \C^n \to \C^n$ the derivative of the Hadamard product reads
			\be \label{DHad}
			D (x \odot f(x))= {\rm diag}(f)  +{\rm diag}(x) Df(x). 
			\ee
			Let $x^{-\bar 1}=  (x_1^{-1},\dots, x_n^{-1})^T$. Using \eqref{DHad} we get
			\begin{align*}
				D(x \odot \phi_\nu\, x^{L(\nu)})&= \mathrm{diag}(\phi_\nu)x^{L(\nu)}+\mathrm{diag}(x)\phi_\nu (L(\nu)^T \odot x^{-\bar 1})^T  x^{L(\nu)}\\
				&= \mathrm{diag}(\phi_\nu)x^{L(\nu)}+ (\phi_\nu \odot x)(L(\nu)^T \odot x^{-\bar 1})^T  x^{L(\nu)}
			\end{align*}
			Taking into account \eqref{DHad}, we have 
			\begin{multline*}
				(D {\bf \Phi}){\bf \Theta} = ( D (   (\phi_\nu \odot x)  a^\nu x^{L(\nu)}  )){\bf \Theta} = ( D ( x \odot \phi_\nu x^{L(\nu)}  )  a^\nu   )) {\bf \Theta}
				=\\
				(a^\nu ({\rm diag}(\phi_\nu )  x^{L(\nu)} +
				(\phi_\nu \odot x) (L(\nu)^T \odot x^{-\bar 1})^T x^{L(\nu)})){\bf \Theta} =\\
				a^{\nu+\mu} (\phi_\nu \odot \theta_\mu \odot x)  x^{L(\nu+\mu)}  +
				a^{\nu+\mu}(\phi_\nu \odot x) (L(\nu)^T \odot x^{-\bar 1})^T  (x \odot \theta_\mu)  x^{L(\nu+\mu)} =\\
				a^{\nu+\mu}  x^{L(\nu+\mu)} (\phi_\nu \odot \theta_\mu \odot x +   \la L(\nu) , \theta_\mu\ra\phi_\nu \odot x ) 
			\end{multline*}
			and similarly,
			\be \label{e:D}
			(D {\bf \Theta}){\bf \Phi} = a^{\nu+\mu}  x^{L(\nu+\mu)} (\phi_\nu \odot \theta_\mu \odot x +   \la L(\mu) , \phi_\nu\ra\theta_\mu \odot x ).
			\ee 
			Now,  \eqref{tpm} takes place.
			
			To verify the last assertion that $[{\bf \Theta},{\bf \Phi}]\in M$ one has to recall the definition of the set $\mc M$. The  vector field    $[{\bf \Theta},{\bf \Phi}]$ would have not been in $M$ when $L(\mu +\nu)$ was not in $\mc M$. We have to consider two possibilities. Either $L_i(\mu)=L_i(\nu)=-1$ for some $i$ or, for some $i\neq j$ we have $L_i(\mu)=-1$, $L_i(\nu)=0$,  $L_j(\mu)=0$ and  $L_j(\nu)=-1$. 
			
			In the first case, $\theta_\mu =\theta_i e_i$, $\phi_\nu=\phi_i e_i$ and so, 
			$$\alpha_{\mu+\nu}=L_i(\nu) \theta_i\phi_i e_i    -     L_i(\mu) \phi_i\theta_i e_i =0.$$
			In the second case, we have $\theta_\mu =\theta_i e_i$, $\phi_\nu=\phi_j e_j$ and again, $\alpha_{\mu+\nu} =0$. 
		\end{proof}
		
		The following statement  is a direct corollary of Lemma \ref{lem_tpm}. 
		\begin{cor}
			If ${\bf \Theta}, {\bf \Phi} \in M$, then $[{\bf \Theta}, {\bf  \Phi}]\in M$.  Moreover, if 
			$${\bf \Phi}=\sum_{\nu\in \omega_1} (\phi_\nu\odot x)  a^\nu x^{L(\nu)},\ \ 
			{\bf \Theta}=\sum_{\mu\in \omega_2} (\theta_\mu\odot x)  a^\mu x^{L(\mu)},$$
			then 
			\be \label{tp}
			[{\bf \Theta}, {\bf \Phi}]=\sum_{\mu\in \omega} \sum_{\nu\in \omega_1}
			\alpha_{\mu+\nu}\  a^{\mu+\nu}  x^{L(\mu+\nu)}  .
			\ee
			with $\alpha_{\mu+\nu}$ being defined by \eqref{alphamunu}.
		\end{cor}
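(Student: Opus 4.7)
The plan is to reduce the statement entirely to the single-term computation of Lemma \ref{lem_tpm} by invoking the bilinearity of the Lie bracket. First I would expand both arguments using the representation guaranteed by the preceding lemma, writing
$$\mathbf{\Theta} = \sum_{\mu \in \omega_2} (\theta_\mu \odot x)\, a^\mu x^{L(\mu)}, \qquad \mathbf{\Phi} = \sum_{\nu \in \omega_1} (\phi_\nu \odot x)\, a^\nu x^{L(\nu)}.$$
Then I would observe that the Lie bracket $[\mathbf{\Theta},\mathbf{\Phi}] = (D\mathbf{\Phi})\mathbf{\Theta} - (D\mathbf{\Theta})\mathbf{\Phi}$ is bilinear, because the Jacobian $D$ is linear and multiplication of the resulting matrix by a vector field distributes over sums. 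Consequently
$$[\mathbf{\Theta},\mathbf{\Phi}] = \sum_{\mu \in \omega_2}\sum_{\nu \in \omega_1} \bigl[(\theta_\mu \odot x)\, a^\mu x^{L(\mu)},\, (\phi_\nu \odot x)\, a^\nu x^{L(\nu)}\bigr],$$
which is a purely formal manipulation legitimate in the power-series module $M$.

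Next I would apply Lemma \ref{lem_tpm} to each individual bracket on the right-hand side. That lemma identifies the $(\mu,\nu)$-th summand with $(\alpha_{\mu+\nu} \odot x)\, a^{\mu+\nu}\, x^{L(\mu+\nu)}$, where $\alpha_{\mu+\nu}$ is given by \eqref{alphamunu}, using additivity $L(\mu+\nu) = L(\mu) + L(\nu)$ of the map $L$. Assembling the terms produces exactly formula \eqref{tp}.

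It remains to confirm that the sum defines a bona fide element of $M$. For each fixed pair $(\mu,\nu)$, Lemma \ref{lem_tpm} already verifies that the bracketed summand belongs to $M$: the potential failures occur only when $L(\mu+\nu)$ escapes $\mathcal{M}$, but in each such case (the two subcases treated at the end of the proof of Lemma \ref{lem_tpm}) the coefficient $\alpha_{\mu+\nu}$ vanishes identically. Since $M$ is closed under formal addition (it is a module over $(\mathbb{C}^n,+,\odot)$ and in particular an abelian group), the full double sum lies in $M$, proving both assertions simultaneously.

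The only possible obstacle is the purely formal one of interchanging infinite sums with the differentiation in $D\mathbf{\Phi}$ and $D\mathbf{\Theta}$; but because the grading by $L$ ensures that only finitely many $(\mu,\nu)$ contribute to any fixed graded component of the output, the manipulation is legitimate and no convergence issue arises.
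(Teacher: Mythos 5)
Your proof is correct and is exactly the argument the paper intends: the paper offers no written proof, simply calling the result a ``direct corollary'' of Lemma \ref{lem_tpm}, and the intended justification is precisely your expansion by bilinearity followed by termwise application of the lemma (with the finiteness-per-graded-component observation handling the formal interchange of sums).
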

		Let $\mathcal{U}_s^\ell$, $s\geq 0$, be the space of polynomial vector fields of the form
		\be \label{Theta_s}
		{\bf \Theta}_s= \sum_{|\mu|=s} (\theta(\mu) \odot x ) a^\mu x^{L(\mu)}.
		\ee
		Then, obviously, $M=\oplus_{s=0}^\infty\mathcal{U}_s^\ell$.

		For any $s\in \N_0$, let
		\be \label{Us_bas}
		U_s=\{ ( \bar{1} \odot x)   a^{\mu} x^{L(\mu)} : |\mu|=s \}.
		\ee
		Since any vector field  ${\bf \Theta}_s$ of the form  \eqref{Theta_s} can be written
		as 
		$$
		{\bf \Theta}_s= \sum_{|\mu|=s} \theta(\mu) \odot (( \bar{1} \odot x)  a^\mu x^{L(\mu)}),
		$$ 
		we can treat  $\mathcal{U}_s^\ell $ as a free module generated by 
		$U_s$ over the ring $(\C^n,+,\odot)$ with the operation of multiplication by the  elements 
		of the ring being  the Hadamard product.
		The dimension of  $\mathcal{U}_s^\ell $ as the module  is the same as the dimension 
		of the space of homogeneous polynomials of degree $s$ in $\ell $ variables,    
		that is, it is 
		${\ell +s-1 \choose s}$.  
		
		\begin{defin}
			We say that ${\bf \Theta}\in M$ is \textit{of level} $s$ if ${\bf \Theta}\in  \mathcal{U}_s^\ell $
			and ${\bf \Theta}\in M$ is of \textit{the level at least}  $s$ if each term  
			$$\Theta=    (\theta \odot x)  a^\mu x^{L(\mu)}  $$
			of   $ {\bf  \Theta} $
			is in some of  $\mathcal{U}_{s+j}^\ell $, where $j\in \N_0$.
		\end{defin}

		By Lemma \ref{lem_tpm} if ${\bf \Theta} \in  \mathcal{U}_s^\ell $,
		${\bf \Phi} \in  \mathcal{U}_t^\ell $ 
		and  $ \text{ad}\  {\bf \Theta}$  is the   adjoint operator acting on $\bf \Phi$
		by
		$$
		(\text{ad\, } {\bf \Theta}) \, {\bf \Phi} =[{\bf \Theta},{\bf \Phi}],
		$$
		then  $ (\text{ad\, } {\bf \Theta})^i {\bf \Phi}$ is an element of 
		$\mathcal{U}_{is+t}^\ell $, that is,
		\be\label{ad_i}
		(\text{ad\, } {\bf \Theta})^i :  \mathcal{U}_{t}^\ell \nc  \to  \mathcal{U}_{is+t}^\ell. 
		\ee

		{\it Remark.}  The map  $(\text{ad\, } {\bf \Theta})^i $ "lifts" the space  $\mathcal{U}_{t}^\ell $ to the space 
		$  \mathcal{U}_{is+t}^\ell.$ For this reason we speak about levels in the normalization process. 
		\nc 
		
		We denote  the set of $\mu$'s appearing in \eqref{Theta_s} by $\sigma(s)$,
		$$
		\sigma(s):=\{ \mu\in \N^\ell_0 \ : |\mu|=s \}. 
		$$
		
		\subsection{The normal form algorithm}
		
		Any equation  of  the form \eqref{sys} can be written as  a   differential equation of the form 
		\be\label{Aeq}
		\dot x=a_0(x)+a_1(x)+a_2(x)+\dots = a(x), 
		\ee
		where  
		$
		a_0(x)=\lambda \odot x,
		$
		$a_s(x) \in  \mathcal{U}_s^\ell $ for $s\ge 1$, 
		$$
		a_s(x)=\sum_{\mu \in \sigma(s)}  (\alpha_\mu \odot x)  a^\mu x^{L(\mu)},
		$$
		where $\alpha_\mu\in \C^n.$
		Since  $a_0(x)  \in   \mathcal{U}_0^\ell $  and the nonlinear part  $a_1(x)$ of \eqref{sys} is from  $\mathcal{U}_1^\ell $,
		equation \eqref{sys} is of the form \eqref{Aeq}.
		
		For instance, system \eqref{ex} is written as equation \eqref{Aeq}
		with 
		$$
		a_0(x)={1 \ch -1} \odot {x_1 \ch x_2}, 
		$$
		$$
		\begin{aligned}
			a_1(x)= & {1 \ch 0 } \odot {x_1 \ch x_2} a^{(1)}_{10} x_1 + {1 \ch 0 } \odot {x_1 \ch x_2}  a^{(1)}_{01} x_2+  {1 \ch 0 } \odot {x_1 \ch x_2}  a^{(1)}_{-13} x_1^{-1} x_2^3+\\
			&  {0 \ch 1 } \odot {x_1 \ch x_2}  a^{(2)}_{10} x_1 + {0 \ch 1 } \odot {x_1 \ch x_2}  a^{(2)}_{01} x_2+  {0 \ch 1 } \odot {x_1 \ch x_2}  a^{(2)}_{02}  x_2^2.
		\end{aligned}
		$$
	    Note that \eqref{sys_k} is just a special case of \eqref{Aeq}.
				
		Below we will present an algorithm which normalizes equation \eqref{Aeq} up to a certain  level.  

		\begin{lem}\label{lem2}
			If  
			\be \label{re_t}
			\la L(\mu), \lambda \ra =0, 
			\ee
			then all entries of the vector   field 
			\be \label{tmu}
			(\alpha_\mu \odot x)  a^\mu x^{L(\mu)}
			\ee
			are resonant terms.  
		\end{lem}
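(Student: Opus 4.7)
The plan is to reduce the statement directly to the resonance criterion \eqref{res_c_e} by writing out each component of the vector field \eqref{tmu} as a single monomial in $x$ and checking that its exponent vector satisfies the resonance relation.

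First I would compute the $k$-th component explicitly. Since the Hadamard product $\alpha_\mu \odot x$ has $k$-th entry $\alpha_{\mu,k} x_k$, we obtain
$$e_k^T\bigl((\alpha_\mu \odot x)\, a^\mu\, x^{L(\mu)}\bigr)=\alpha_{\mu,k}\, a^\mu\, x_k\, x^{L(\mu)}=\alpha_{\mu,k}\, a^\mu\, x^{e_k+L(\mu)},$$
where $e_k$ is viewed as a multi-index in $\mathbb{N}_0^n$. Next I would verify that $e_k+L(\mu)$ is a genuine multi-index with non-negative entries, so the expression above really is a polynomial monomial: by the defining constraints of $\mathcal{M}$ and the last assertion of the lemma preceding Lemma \ref{lem_tpm}, either the $k$-th component is identically zero (in which case nothing has to be checked), or $L_i(\mu)\ge 0$ for every $i\ne k$ and $L_k(\mu)\ge -1$. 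In the latter case $e_k+L(\mu)\in\mathbb{N}_0^n$, and in the delicate case $L_k(\mu)=-1$ the factor $x_k$ from the Hadamard product precisely cancels the negative power, yielding degree zero in $x_k$.

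The final step is to plug the multi-index $\beta:=e_k+L(\mu)$ into the resonance criterion \eqref{res_c_e}. One computes
$$\langle \lambda,\beta\rangle-\lambda_k=\lambda_k+\langle \lambda,L(\mu)\rangle-\lambda_k=\langle \lambda,L(\mu)\rangle,$$
which vanishes by the hypothesis \eqref{re_t}. Therefore every non-trivial entry of \eqref{tmu} is of the form (scalar coefficient)$\times$(parameter monomial $a^\mu$)$\times$(resonant $x$-monomial $x^\beta$), which by definition makes it a resonant term.

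I do not expect any serious obstacle. The only point requiring care is bookkeeping of the exponent vector in the case $L_k(\mu)=-1$, where one must recognise that the seemingly negative power is absorbed by the intrinsic $x_k$ factor coming from the Hadamard product with $x$; once this is handled, the proof is just the identity $\langle\lambda,e_k+L(\mu)\rangle-\lambda_k=\langle\lambda,L(\mu)\rangle$.
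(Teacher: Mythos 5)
Your proposal is correct and follows essentially the same route as the paper: write the $k$-th component as a scalar times $a^\mu x^{e_k+L(\mu)}$ and observe that $\langle\lambda,e_k+L(\mu)\rangle-\lambda_k=\langle\lambda,L(\mu)\rangle=0$. Your extra remark about the exponent $e_k+L(\mu)$ being a genuine multi-index when $L_k(\mu)=-1$ is a small bookkeeping point the paper leaves implicit, but it does not change the argument.
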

		\begin{proof}
			The $i$-th entry of \eqref{tmu} is
			$$  \alpha^{(\mu)}_i x_i  x^{L(\mu)} a^\mu =  \alpha^{(\mu)}_i   x_1^{L_1(\mu)}\cdots  x_{i-1}^{L_{i-1}(\mu)} 
			x_{i}^{L_{i}(\mu)+1}  x_{i+1}^{L_{i+1}(\mu)} \cdots     x_{n}^{L_{n}(\mu)} a^\mu.     $$ 
			Since $$ \la {L_1(\mu)}, \dots,    {L_{i-1}(\mu)},  
			{L_{i}(\mu)+1},  {L_{i+1}(\mu)}, \dots ,{L_{n}(\mu)}, \lambda\ra -\lambda_i  =
			\la L(\mu), \lambda\ra,$$
			the statement of the lemma takes place.  
		\end{proof}
		
		Lemma \ref{lem2} justifies the following definition.
		\begin{defin}
			It is said  that a term of the form \eqref{tmu}
			of the right-hand side of \eqref{Aeq} is 
			\emph{resonant} if  \eqref{re_t} holds. 
		\end{defin}
		
		Under known approaches   a normalization is  performed up to terms of some degree of $x$.
		However, it is possible to perform the normalization up to some degree of the polynomials in  
		the parameters of the system. 
		
		\begin{defin}
			We say that equation \eqref{Aeq}  is\textit{ in the normal form up to level} $s$ if 
			all non-resonant terms  in $a_1(x),\dots, a_{s}(x)$ are equal to zero.  
		\end{defin}
		
		Clearly, if equation \eqref{Aeq} is in the normal form for all levels $s\in \N$, 
		then it is in the normal form in the sense of Definition \ref{def_nf}.

		By definition we say that the operator   $\pounds^a : M \to M$  which acts on elements of the form  \eqref{Theta} by 
		\be \label{La}
		\pounds^a ( \sum_{\mu \in \omega}
		(\theta_\mu \odot x)  a^\mu x^{L(\mu)})=  \sum_{\mu\in \omega} \la L(\mu), \lambda \ra  (\theta_\mu \odot x)  a^\mu x^{L(\mu)}) 
		\ee
		is the {\it homological operator } of equation \eqref{Aeq} (the superscript $a$ indicates that the operator 
		is related to the equation \eqref{Aeq} with the right-hand sides being an element of the module $M$, 
		so it is different from the homological operator  of  \eqref{ndim}  defined by  \eqref{ho}.
		It is not difficult to check that $\pounds^a$ is a homomorphism of the module $M$ over $\C^n$. 
		Clearly being additive, we also have that for any $\phi\in\C^n$
		\begin{align*} 
			\pounds^a (\phi \odot  \sum_{\mu \in \omega}
			( \theta{(\mu)} \odot x)  a^\mu x^{L(\mu)})&=  \sum_{\mu\in \omega} \la L(\mu), \lambda \ra  (\phi \odot\theta{(\mu)} \odot x)  a^\mu x^{L(\mu)}\\
			&=\phi \odot \pounds^a ( \sum_{\mu \in \omega}
			( \theta{(\mu)} \odot x)  a^\mu x^{L(\mu)}) ) .
		\end{align*}
		
		The restriction of 
		$
		\pounds^a$ on   $\mathcal{U}_s^\ell $ is denoted by $\pounds^a_s$. 
		Obviously,  $
		\pounds^a_s: \mathcal{U}_s^\ell  \to  \mathcal{U}_s^\ell $. 
		From \eqref{La} it is clear that the set $U_s$ defined 
		by \eqref{Us_bas} is the set of basis eigenvectors of $\pounds^a_s$ 
		and 
		we can split $  \mathcal{U}_s^\ell $   as
		$$
		\mathcal{U}_s^\ell   ={\rm im} \pounds_s^a \oplus \ker \pounds_s^a.
		$$
		
		Assume  that equation \eqref{Aeq} is in the normal form up to level $s-1$, {$s\ge 1$},
		that is,  for terms of the  form \eqref{tmu} appearing in \eqref{Aeq} if 
		$|\mu|\leq s-1$ and $\la L(\mu), \lambda\ra \ne 0 $,  then $  \alpha{(\mu)}=0$.
		Then the homological equation 
		$$
		\pounds^a_s(h_s)=a_s-g_s 
		$$
		can be solved  for $h_s$ and $g_s$  as follows:
		\be \label{hsd}
		h_s(x)= \sum_{\substack{\mu:\mu \in \sigma(s),\\ \la L(\mu), \lambda \ra \ne 0}} \frac 1{  \la L(\mu), \lambda \ra  }  (\alpha_\mu \odot x)  a^\mu x^{L(\mu)}, 
		\ee
		\be \label{gsd}
		g_s(x)= \sum_{\substack{\mu:\mu \in \sigma(s),\\ \la L(\mu), \lambda \ra= 0}}   (\alpha_\mu \odot x)  a^\mu x^{L(\mu)}.
		\ee

		\begin{teo}\label{th_nf} Assume that equation  \eqref{Aeq} is in the normal form up to level $s-1$, $s\ge 1$, and 
			let 
			\be \label{Hs}
			H_s(x)=\exp (h_s(x)),
			\ee 
			where $h_s$ is defined by \eqref{hsd}. 
			Then performing the   substitution 
			$y=H_s(x)$ and changing $y$ to $x$ we obtain from \eqref{Aeq} an equation,    which  the right hand side is from $M$ and is in the 
			normal form up to level $s$.
		\end{teo}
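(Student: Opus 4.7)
The plan is to apply the Lie series formula (\ref{LT}) with $b = h_s$ and parameter value $1$, so that after the coordinate change associated with $H_s = \exp(h_s)$ the transformed right-hand side equals
\[
\sum_{k \geq 0} \frac{1}{k!}(\ad h_s)^k\, a(x) \;=\; \sum_{t \geq 0}\sum_{k \geq 0}\frac{1}{k!}(\ad h_s)^k a_t(x),
\]
where $a = \sum_t a_t$ with $a_t \in \mathcal{U}_t^\ell$. The grading relation (\ref{ad_i}) places $(\ad h_s)^k a_t$ in $\mathcal{U}_{ks+t}^\ell$. From this I would read off two facts: first, for every $r < s$ the level-$r$ component is unchanged and equals $a_r$, since $(k,t)\ne(0,r)$ forces $ks+t \ge s > r$; second, the level-$s$ component is exactly $a_s + [h_s, a_0]$, because the only pairs with $ks+t = s$ are $(0,s)$ and $(1,0)$ (the hypothesis $s \ge 1$ rules out $k \ge 2$). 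The first fact, combined with the assumption that \eqref{Aeq} is already in normal form up to level $s-1$, ensures the transformed equation inherits that property.

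The core of the argument is then the identity $[h_s, a_0] = g_s - a_s$. I would compute it by writing $a_0 = (\lambda \odot x)\, a^0 x^{L(0)}$ with $\nu = 0$ and $\phi_0 = \lambda$, and applying Lemma \ref{lem_tpm} termwise to the expansion (\ref{hsd}) of $h_s$, for which $\theta_\mu = \alpha_\mu/\langle L(\mu),\lambda\rangle$. Then formula (\ref{alphamunu}) yields
\[
\alpha_{\mu+0} \;=\; \langle L(0),\theta_\mu\rangle\,\lambda - \langle L(\mu),\lambda\rangle\,\theta_\mu \;=\; -\alpha_\mu,
\]
and summing over those $\mu \in \sigma(s)$ with $\langle L(\mu),\lambda\rangle \ne 0$ gives $[h_s, a_0] = -(a_s - g_s)$. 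Consequently the new level-$s$ part equals $a_s + [h_s, a_0] = g_s$, which consists entirely of resonant terms by Lemma \ref{lem2}, so the transformed equation is in normal form up to level $s$.

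That the transformed right-hand side remains in $M$ follows at once from the corollary to Lemma \ref{lem_tpm} -- iterated brackets stay in $M$ -- and the Lie series is meaningful purely formally, since only finitely many summands contribute at each fixed level. The only non-trivial step is the calculation of $[h_s, a_0]$, and it is there that the choice of denominator $\langle L(\mu),\lambda\rangle$ in (\ref{hsd}) is revealed as exactly what is needed to cancel the non-resonant part of $a_s$; the remaining ingredients -- the grading bookkeeping and the invariance of $M$ -- are immediate from the machinery already developed.
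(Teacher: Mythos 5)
Your proof is correct and follows essentially the same route as the paper's: apply the Lie series expansion, use the grading \eqref{ad_i} to see that levels below $s$ are untouched and that only $a_s$ and $[h_s,a_0]$ contribute at level $s$, and then check $a_s+[h_s,a_0]=g_s$. The only difference is that you explicitly carry out the bracket computation via \eqref{alphamunu} (obtaining $\alpha_{\mu+0}=-\alpha_\mu$), which the paper merely asserts follows from \eqref{tpm}, \eqref{hsd} and \eqref{gsd}.
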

		\begin{proof}
			According to the theorem on Lie series for vector fields after transformation \eqref{Hs} we obtain from \eqref{sys} the vector field 
			\be \label{ahs}
			a(x) +  (\text{ad\ }h_s)  a(x)+  \sum_{i=2}^\infty \frac{1}{i!} (\text{ad\ }h_s)^i a(x).
			\ee
			By \eqref{ad_i} the last summand  is of the level at least $s+1$ 
			and for the first two, we have 
			\begin{multline*}
				a(x) +  (\text{ad\ }h_s)  a(x)= a(x)+ [h_s(x), a(x)]=\\ a_0(x)+a_1(x)+\dots + a_{s-1}(x)
				+a_s(x)+[h_s(x),a_0(x)]+\dots,  
			\end{multline*}
			where the dots stand for the terms of the level at least $s+1$. 
			
			By \eqref{tpm}, \eqref{hsd} and \eqref{gsd} $a_s(x)+ [h_s(x),a_0(x)]=g_s(x)$.
			Since by our assumption \eqref{Aeq} is in the normal form up to level $s-1$,
			the  expression \eqref{ahs} is in the normal form up to level $s$. 
		\end{proof}
		
		As a direct outcome of Theorem \ref{th_nf} we have the following statement. 
		\begin{cor}
			There are polynomial maps $H_1(x), \dots, H_s(x)$, such that equation  \eqref{sys}
			is transformed to an equation which is in the normal form up to level $s$ by the transformation 
			$y=H_s\circ \cdots \circ H_1$. 
		\end{cor}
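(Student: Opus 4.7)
The plan is to prove the corollary by induction on $s$, with Theorem \ref{th_nf} furnishing the inductive step.

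For the base case $s=0$, I would observe that equation \eqref{sys} is vacuously in the normal form up to level $0$, since the only term of level $0$ is the linear diagonal part $a_0(x)=\lambda\odot x$ and the definition of normal form up to level $s$ only constrains the terms $a_1,\dots,a_s$. The corresponding transformation is simply the identity (the empty composition), so there is nothing to construct.

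For the inductive step I would assume that polynomial maps $H_1,\dots,H_{s-1}$ have been constructed so that $y=H_{s-1}\circ\cdots\circ H_1$ brings \eqref{sys} into an equation of the form \eqref{Aeq} whose right-hand side is in $M$ and is in the normal form up to level $s-1$. Reading off the level-$s$ coefficients $\alpha_\mu$ of this transformed equation, I would define $h_s\in\mathcal{U}_s^\ell$ by formula \eqref{hsd} and set $H_s=\exp(h_s)$. Theorem \ref{th_nf} then guarantees that, after the substitution $y=H_s(x)$, the resulting right-hand side lies in $M$ and is in the normal form up to level $s$; consequently the composition $H_s\circ H_{s-1}\circ\cdots\circ H_1$ does what the corollary requires.

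The one point that I expect to need justification is the assertion that each $H_s$ is a genuine polynomial map and not merely a formal series, since the time-one flow $\exp(h_s)$ of a nonlinear vector field is in general not polynomial. This is where the level-based grading saves the day: $h_s\in\mathcal{U}_s^\ell$, so by \eqref{ad_i} the iterated adjoints $(\text{ad}\,h_s)^i$ send $\mathcal{U}_t^\ell$ into $\mathcal{U}_{is+t}^\ell$, and therefore only finitely many terms of the Lie series defining $\exp(h_s)$ contribute to the right-hand side below any prescribed finite level. The tail may thus be truncated at, say, level $s+1$ without altering the effect on the normal form up to level $s$, producing an honest polynomial map $H_s$. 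The composition of such truncations is again polynomial, which completes the argument.
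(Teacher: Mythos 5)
Your proof is correct and matches the paper's (implicit) argument: the paper offers no written proof, calling the corollary a ``direct outcome'' of Theorem \ref{th_nf}, and the intended reasoning is exactly your induction on $s$ with the theorem as the inductive step. Your extra remark on why each $H_s$ can be taken to be a genuine polynomial map --- truncating the Lie series $\exp(h_s)$ using the level grading \eqref{ad_i}, since $h_s\in\mathcal{U}_s^\ell$ with $s\ge 1$ forces all higher flow terms to higher levels --- is a detail the paper leaves unaddressed, and it is handled correctly.
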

		
		From the results presented above we have the following algorithm for the normalization of equation 
		\eqref{Aeq} up to level  $m$. 
		
		\medskip
		
		{\it Algorithm A.} 
		
		Set $a_0(x):=A x$,
		${\bf \Xi}_k(x):=0$ for $k=1,2,\dots, m.$ 
		
		For $s=1,\dots,m$ do the following:
		\begin{enumerate}	
			\item[ (i)] Define $h_s$ and $g_s$ by \eqref{hsd} and \eqref{gsd}; 
			\item[ (ii)] Compute 
			$$
			{\bf \Xi}= \sum_{k=0}^{s-1} \sum_{  i=1}^{ \lfloor \frac{m-k}{s}\rfloor }  \frac{1}{i!} (\text{ad\ }h_s(x))^i a_k(x)
			$$ 
			and write 
			${\bf \Xi}= \sum_{i=s+1}^m {\bf \Xi}_i,   $ where ${\bf \Xi}_i\in \mathcal{U}^\ell_i$;
			\item[ (iii)] Let $a_s=g_s$, $a_{i}={\bf \Xi}_i$ for $i=s+1,\dots, m$.	  
		\end{enumerate} 
		\medskip

		We see that the  algorithm is different from  Algorithm 1, in particular, the calculations are  performed in different spaces. 
		In the next section, we reformulate the algorithm in a simpler setting and give an example of 
		computing the normal form of equation \eqref{ex}.  
		
		\begin{pro} \label{pro_nf_k}
			If system   \eqref{Aeq} is in normal form up to level $s$, then it is in the normal form up to 
			order at least $s+1$.
		\end{pro}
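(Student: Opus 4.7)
The plan is to relate the two gradings of the right-hand side of \eqref{Aeq}: the \emph{level}, i.e., total degree in the parameters $a$, and the \emph{order}, i.e., total degree in $x$. My strategy is to first establish the one-sided comparison that every monomial of level $k$ has $x$-order at least $k+1$. Once this is in hand, the proposition follows by a clean counting argument.

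To prove this level-to-order inequality, fix $\mu\in \N_0^\ell$ with $|\mu|=k$ and look at a typical summand $(\alpha_\mu\odot x)\,a^\mu x^{L(\mu)}$ of $a_k(x)$. Its $x$-degree is $1+|L(\mu)|$. Using additivity of $L$ together with $L(e_i)=\bi^{(i)}$, one may write $|L(\mu)|=\sum_{i=1}^\ell \mu_i\,|\bi^{(i)}|$. Each row $\bi^{(i)}$ of $\tilde L$ is the subscript of some parameter $a^{(k)}_{\bi^{(i)}}$ of \eqref{sys_k}, so the monomial it multiplies is $x_k\,x^{\bi^{(i)}}$, a nonlinear term of $x$-degree $1+|\bi^{(i)}|\ge 2$. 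This forces $|\bi^{(i)}|\ge 1$ for every $i$. Hence $|L(\mu)|\ge\sum_i\mu_i=k$, and the $x$-degree of the term is at least $k+1$.

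With the inequality at hand, the proposition is immediate. Assume the system is in normal form up to level $s$. By hypothesis, each $a_k(x)$ with $1\le k\le s$ contains only resonant terms, and $a_0(x)=\lambda\odot x$ is trivially resonant. For $k\ge s+1$, every monomial of $a_k(x)$ has $x$-degree at least $k+1\ge s+2$, so it cannot contribute to the portion of the right-hand side of $x$-degree at most $s+1$. Putting these facts together, the right-hand side of \eqref{Aeq} contains no non-resonant monomials of $x$-degree $\le s+1$, which is exactly the definition of being in normal form up to order $s+1$. The only subtle point in the whole argument is verifying $|\bi^{(i)}|\ge 1$, and this rests on the standing convention that the sums in \eqref{sys_k} range over nonlinear monomials, the linear part being absorbed into $Ax$; there is no genuine computational obstacle.
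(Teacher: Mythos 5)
Your proof is correct and follows essentially the same route as the paper's: both arguments rest on the observation that $|\bi|\ge 1$ for every $\bi\in\Omega$, whence $|L(\mu)|\ge|\mu|$ by additivity of $L$, so a term of level $s$ has $x$-order at least $s+1$. Your write-up is merely more explicit than the paper's three-line proof (in particular in spelling out $|L(\mu)|=\sum_i\mu_i|\bi^{(i)}|$ and the final counting step), but there is no difference in substance.
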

		\begin{proof}
			For each multiindex $\bi  \in \Omega$ it holds  $|\bi|\ge 1$.  Therefore, by \eqref{Lnu},  if a term of the form $(\theta \odot x)a^\mu x^{L(\mu)}$ is of 
			level $s$ then $|L(\mu)| \ge s$. Thus, all resonant terms in the normal form  of level $s$ or higher are  of order $s+1$ or higher. 
		\end{proof}
		
		\section{Generalized  vector fields}
		
		We treat $\N_0^\ell $ as an {\it ordered set} (ordered, for instance, using  the degree lexicographic order).
		
		\begin{defin} 
			Let   $ \alpha $  be a map  defined on some subset $\omega $ of $\N_0^\ell $ 
			$$
			\alpha : \omega\subset  \N_0^\ell \to \C^n,
			$$ 
			that is,  $ \alpha$ assigns to every $\nu\in  \omega  $ an $n$-tuple  
			$$ \alpha_\nu=( \alpha_1{(\nu)},\dots,  \alpha_n{(\nu)}).$$
			We say that an $n$-tuple  
			of formal power series 
			\be \label{al_dis}
			\hat \alpha=\sum_{\nu \in \omega} \alpha_\nu a^\nu,
			\ee
			where  $\omega=\text{Supp}(\hat \alpha)$, 
			is a \emph{generalized formal vector field}.
		\end{defin}
		\nc

		Note that  expression \eqref{al_dis} looks similar to a usual  vector field depending on variables  
		$a^{(1)}_{\bi^{(1)}}),     a^{(1)}_{\bi^{(2)}},   \dots, a^{(n)}_{\bi^{(\ell_n)}}$
		defined 
		on  $\C^\ell$, 
		however,  it is not a vector  field in these variables  in the usual sense, because the usual vector field is defined by assigning  
		to a vector from  $\C^\ell$ a vector of the same dimension, but  if a  series \eqref{al_dis}
		converges it  assigns to  a point from  $\C^\ell$ a vector from  $\C^n$. 
		
		Since  the objects defined above are not vector fields in the usual sense, we 
		call them generalized vector fields.  
		\nc  
		
		We  denote the  set  of all formal vector fields defined by \eqref{al_dis} by  $\mathcal{A} $. 
		It is not difficult to check that $\mathcal{A} $ is a module over the ring $(\C^n,+,\odot)$.
		The zero vector in $\mathcal{A}$ is a series \eqref{al_dis} where $\alpha_\nu=0\in \C^n$  for all 
		$\nu\in \N_0^\ell$.   For any $k\in\N_0$, let  $\mathcal{A}_k$ be the subset of all elements of $\mathcal{A}$ of 
		the form 
		$$
		\sum_{\mu: |\mu|=k}  \alpha_\mu a^\mu.
		$$
		Then   $\mathcal{A}_k$ is a  module over the ring $(\C^n,+,\odot)$ and  $\mathcal{A}$ is a direct sum of   $\mathcal{A}_k$, $k=0,1,\dots$.

		Recall that we  consider  $M$ as the direct sum of modules $\mathcal{U}^\ell_s $ over $\C^n$, $s=0,1,2,\dots$,   and  define a module homomorphism
		$$
		\mathfrak {T} :  \mathcal{A} \to M
		$$
		\be \label{T}
		\mathfrak{T} \left( \sum_{\mu \in \omega}  \theta_\mu a^\mu \right)=\sum_{\mu\in \omega} (\theta_\mu\odot x)  a^\mu x^{L(\mu)}.
		\ee
		Clearly,  $  \mathfrak{T}$ is an  isomorphism. 
		
		We define the Lie bracket of $\hat{\theta}= \sum_{\mu \in \omega}  \theta_\mu a^\mu$ and 
		$  \hat  \phi= \sum_{\nu \in \omega_1} \phi_\nu a^\nu$  by
		$$
		[\hat \theta,\hat \phi]= \mathfrak{T}^{-1}( [\mathfrak{T}(\hat\theta) , \mathfrak{T}(\hat\phi)]). 
		$$
		Then, by  \eqref{tp} and \eqref{T}  
		\be \label{lie_di} 
		[\hat  \theta , \hat \phi]=\sum_{\mu\in \omega} \sum_{\nu\in \omega_1}   
		\left( \la L(\nu) , \theta_\mu \ra  \phi_\nu  -  \la L(\mu) , \phi_\nu\ra  \theta_\mu     
		\right) a^{\mu+\nu}.
		\ee 
		Since 
		\begin{equation*}
			\mathfrak{T}\left([\hat \psi,[\hat \theta,\hat \phi]]\right)= [ \mathfrak{T}(\hat \psi),  \mathfrak{T}([\hat \theta , \hat \phi])] = [ \mathfrak{T}(\hat \psi),  [\mathfrak{T}(\hat \theta) , \mathfrak{T}(\hat \phi)] ) ] , 
		\end{equation*}
		it is easily seen 
		that for the Lie bracket  in $\mathcal{A}$  defined by \eqref{lie_di} the
		Jacobi identity holds, so  $\mathcal{A}$, {being also a vector space over $\C$}, is a Lie algebra
		and so $\mathfrak{T}$ defines a Lie algebra isomorphism. 
		
		We could have defined the Lie bracket in $\mathcal{A}$ in a different way. The $n\times n$ matrix  $\mathfrak{D}(\theta_\mu a^\mu):=\theta_\mu L(\mu)a^\mu$, associated to the monomial $\theta_\mu a^\mu$, 
		can be extended to a $\C^n$-module homomorphism (or linear operator) from $\C^n$ to $\C^n$ inducing the Lie bracket. Indeed, the setting 
		\[  [\hat  \theta , \hat \phi] = \mathfrak{D}(\phi_\nu a^\nu)\theta_\mu a^\mu - \mathfrak{D}(\theta_\mu a^\mu)\phi_\nu a^\nu  \]
		produces the same as \eqref{lie_di} which can be observed from 
		\begin{align*} \mathfrak{D}(\phi_\nu a^\nu)\theta_\mu a^\mu - \mathfrak{D}(\theta_\mu a^\mu)\phi_\nu a^\nu
			&= \phi_\nu L(\nu)\theta_\mu a^{\mu+\nu}-\theta_\mu L(\mu)\phi_\nu a^{\mu+\nu} \\
			&= \left( \la L(\nu),\theta_\mu \ra  \phi_\nu - \la L(\mu)\phi_\nu \ra \theta_\mu \right) a^{\mu+\nu}.
		\end{align*}

		One may expect that this operator is  the derivative  $D(\hat\Theta)$ evaluated at $x=\bar{1}$ but it is not the case. However, the connection exists,  the second term of \eqref{e:D} evaluated at $x=\bar{1}$, coincides with $\mathfrak{D}(\theta_\mu a^\mu)$. So, we may think of $\mathfrak{D}$ as a derivative.
		
		We will compute normal forms in terms of such generalized  vector fields in the following way.  
		
		Let $\hat \alpha$ be the image of the right hand side of \eqref{Aeq} under the isomorphism $\mathfrak{T}^{-1}$, so
		$$
		\mathfrak{T}^{-1}(a(x))=\sum_{k=0}^\infty \hat \alpha_k,
		$$
		where 
		\be \label{als}
		\hat \alpha_0=\lambda, \quad 
		\hat \alpha_k=\sum_{\mu \in \sigma(k)} \alpha_\mu a^\mu \quad \text{for}\  k\ge 1 .
		\ee
		\begin{defin}
			It is said that the generalized  vector field  $\hat \alpha =\sum_{k=0}^\infty \hat \alpha_k  $  (where $\hat \alpha_k $ is of the form \eqref{als})  is in\textit{ the normal form up to level} $s$ if the coefficients 
			of  all non-resonant terms   in $   \hat \alpha_1, \dots,\hat  \alpha_s$ are equal to zero. The generalized  vector field $\hat \alpha _0$ will be always considered to be in the  normal form  up to level $0$.
		\end{defin}
		
		Assume that $\hat \alpha$  is in the normal form up to level $s-1$, $s\ge 1$. Let
		\be  \label{ez}
		\begin{aligned}
			\hat\eta_s&= \sum_{\substack{\mu:\mu \in \sigma(s),\\ \la L(\mu), \lambda \ra \ne 0}} \frac 1{  \la L(\mu), \lambda \ra  }  \alpha_\mu   a^\mu ,  \\
			\hat\zeta_s&= \sum_{\substack{\mu:\mu \in \sigma(s),\\ \la L(\mu), \lambda \ra= 0}}   \alpha_\mu   a^\mu.
		\end{aligned}
		\ee
		that is $\hat \eta_s=\mathfrak{T}^{-1}(h_s(x)),$ and   $\hat \zeta_s=\mathfrak{T}^{-1}(g_s(x)),$ 
		where $h_s(x)$ and $g_s(x)$ are defined by \eqref{hsd} and \eqref{gsd}, respectively. 
		Since $\mathfrak{T}$ is a Lie algebra isomorphism we can reformulate Algorithm A as follows.  
		
		\medskip

		{\it Algorithm B.}
		
		Set $\hat \alpha_0:= \lambda$,  
		$\hat \xi_k:=0$ 
		for $k=1,2,\dots, m$. 
		
		For $s=1,\dots,m$ do the following:
		\begin{enumerate}
			\item[ (i)] Define $\hat \eta_s$ and $\hat \zeta_s$ by \eqref{ez}; 
			
			\item[ (ii)] Compute 
			$$
			\hat \xi =\sum_{k=0}^{s-1} \sum_{  i=1}^{\lfloor \frac{m-k}{s}\rfloor }   \frac{1}{i!} (\text{ad\ }\hat \eta_s)^i\hat  \alpha_k  
			$$ 
			(where $\ad \hat \eta_s:=[\hat \eta_s,\cdot]$ is the adjoint operator acting on $\mathcal{A}$) 
			and represent $\hat \xi$ in the form  $\hat \xi=\sum_{i=s}^m \hat \xi_i$, where $\hat \xi_i\in \widehat{U}_i^\ell$;
			\item[ (iii)] Let $\hat \alpha_s=\hat \zeta_s$, $\hat \alpha_{s+1}=\hat \xi_{s+1}, \dots, \hat \alpha_{m}=\hat \xi_{m}$. 
		\end{enumerate} 
		
		The obtained vector field  $\hat \alpha$ is in the normal form up to level $m$.  
		
		\medskip
		
		By Proposition \ref{pro_nf_k} 
		if  system \eqref{sys}   is in normal form up to level $s$ then it is in the normal form up to 
		order at least $s+1$. Obviously, when    $\hat \alpha$ is in the normal form up to level $m$,
		a normal form of \eqref{sys} can be built up from   $\hat \alpha$ using the following procedure:
		
		\medskip
		
		Set $g_k(x)= (0,0,\dots,0)^T$  for $k=1,\dots, m$.  For $\mu \in \cup_{k=1}^m{\sigma(k)} $ do the following:\\
		if  $\alpha_\mu \neq  0, \ |L(\mu)|=k $, then $g_k(x)= g_k(x)+(\alpha_\mu \odot x)  a^\mu x^{L(\mu)}$. 
		
		\medskip
		
		Clearly, the obtained equation 
		$$
		\dot x= A x+\sum_{k=1}^m g_k(x)
		$$
		is the normalization of \eqref{sys} up to order $m$. 
		
		\medskip
		
		{\it Example.}  As an example of applying the algorithm we consider the computation of normal form of system \eqref{ex}.  For $\mu \in \N_0^\ell$ we will use the abbreviation $ [\mu]=[\mu_1,\dots, \mu_\ell] := a^\mu $.
		
		By Proposition \ref{pro_nf_k} in order to compute the normal form of \eqref{ex} up to order 5 it is 
		sufficient to compute the normal form of \eqref{ex} up to level 4. 
		
		At the level $0$ the set $\sigma(0) $ consists of only one vector, $(0,0,0,0,0,0)$ with 
		$$\hat \alpha_0=\alpha_{(  0,0,0,0,0,0)}={1 \ch -1 }[0,0,0,0,0,0].$$
		Passing to the level 1,   $\sigma(1)$ is the set of vectors   
		\be \label{eT}
		e_1^T, \dots, e_6^T,
		\ee
		which form the standard basis of $\Z^6$.  The 
		vector field $\hat \alpha_1$ is obtained by using the nonlinear terms of \eqref{ex}:
		\begin{align*}
			\hat \alpha_1&={1 \ch 0 }[e_1^T]+ {1 \ch 0 } [e_2^T]+{1 \ch 0 }[e_3^T]+ {0 \ch 1 }[e_4^T]+ {0 \ch 1 } [e_5^T]+{0 \ch 1 }[e_6^T]\\
			&={1 \ch 0 } a^{(1)}_{10} + {1 \ch 0 }  a^{(1)}_{01} +{1 \ch 0 }  a^{(1)}_{-13}+ {0 \ch 1 }  a^{(2)}_{02}+ {0 \ch 1 }  a^{(2)}_{10} +{0 \ch 1 } a^{(1)}_{01}.
		\end{align*}
		Then by (i) of Algorithm B for $s=1$ we have $\zeta_1=\bar 0$ and 
		\begin{align*}
			\hat \eta_1&=   {1 \ch 0 } [e_1^T] - {1 \ch 0 } [e_2^T]-  \frac 14 {1 \ch 0 } [e_3^T]- \frac 12 {0 \ch 1 } [e_4^T]+
			{0 \ch 1 } [e_5^T] -{0 \ch 1 }[e_6^T]
			\\ 
			&= {1 \ch 0 } a^{(1)}_{10} - {1 \ch 0 }  a^{(1)}_{01}-  \frac 14 {1 \ch 0 }  a^{(1)}_{-13}- \frac 12 {0 \ch 1 }  a^{(2)}_{02}+ {0 \ch 1 }  a^{(2)}_{10} -{0 \ch 1 } a^{(1)}_{01}.
		\end{align*}
		
		When we know the level $s$, that is the set  $\sigma(s)$, the next level, the set  $\sigma(s+1)$, is obtained
		by adding to the elements of $\sigma(s)$, one of vectors \eqref{eT}.

		According to the  Algorithm B we have to  set 
		\be\label{in_con} \hat \alpha_2=\hat \alpha_3=\hat \alpha_4=(0,0,\dots,0)^T.
		\ee
		
		Next, we compute $\hat\xi_1+\dots +\hat \xi_4$ according to (ii); that is, we compute the sum 
		\be \label{triang}
		\begin{aligned}
			& \phantom{ \hat \alpha_0+ (\ad \hat \eta_1) \hat \alpha_0  +} \frac 12  (\ad \hat \eta_1)^2 \hat \alpha_0+\frac 1{3!} 
			(\ad \hat \eta_1)^3 \hat \alpha_0+  \frac 1{4!} 
			(\ad \hat \eta_1)^4 \hat \alpha_0+ \\
			& \phantom{\hat \alpha_1+ }(\ad \hat \eta_1) \hat \alpha_1  +\frac 12  (\ad \hat \eta_1)^2 \hat \alpha_1+\frac 1{3!} 
			(\ad \hat \eta_1)^3 \hat \alpha_1+\\
			&
			\hat \alpha_2+  (\ad \hat \eta_1) \hat \alpha_2  +\frac 12  (\ad \hat \eta_1)^2 \hat \alpha_2 +\\
			&  \hat \alpha_3 + (\ad \hat \eta_1) \hat \alpha_3  +\\
			&  \hat \alpha_4.\\
		\end{aligned}
		\ee
		Then, for level 1, we have   
		\be \label{lev1} 
		(\ad \hat \eta_1) \hat \alpha_0 +\hat \alpha_1=\bar 0.
		\ee   		
  One can observe that the terms on the diagonals of \eqref{triang} are  at  the same levels, 
that is, they are  from $\mathcal{U}^6_2,  \mathcal{U}^6_3 $ 	and $	\mathcal{U}^6_4$, respectively. 
		Thus, for level 2, we obtain
		\be \label{lev2}
		\hat \alpha_2+ (\ad \hat \eta_1) \hat \alpha_1 +\frac 12  (\ad \hat \eta_1)^2 \hat \alpha_0.
		\ee 
		By \eqref{in_con} $  \hat \alpha_2= 0$ and for the second  term of the sum given above
		$$
		\begin{aligned}
			(\ad \hat \eta_1) \hat \alpha_1 =& {-2 \ch 0} [1,1,0,0,0,0]+
			{-5/2\choose 0} [1,0,1,0,0,0]+{3/4 \ch 0} [0,1,1,0,0,0]+\\
			& {1/2\ch 0}[0,1,0,1,0,0]
			+
			{2\ch -2}[0,1,0,0,1,0]+ {-3/4 \ch 0} [ 0,0,1,1,0,0]+\\
			&{15/4\ch -5/4}[0,0,1,0,1,0]
			+
			{-9/4\ch 0}[0,0,1,0,0,1]+
			{0\ch 3}[0,0,0,1,1,0]
			+\\
			&
			{0\ch -1/2}[0,0,0,1,0,1]
			+
			{0\ch 2}[0,0,0,0,1,1].
		\end{aligned} 
		$$
		From \eqref{lev1}  we observe that  $(\ad \hat \eta_1)\,  \hat \alpha_0=-\hat \alpha_1$ and using this in \eqref{lev2} gives
		
		\begin{align*}
			\frac{1}{2}  (\text{ad\ } \eta_1)^2 \alpha_0+   (\text{ad\ }\eta_1) \alpha_1&= 
			{-1 \ch 0 }[1,1,0,0,0,0]
			+
			{-5/4\ch 0 }[1,0,1,0,0,0] \\
			&+
			{3/8\ch 0 }[0,1,1,0,0,0]
			+
			{1/4 \ch 0 }[0,1,0,1,0,0]\\
			&+
			{1\ch  -1 }[0,1,0,0,1,0]
			+
			{-3/8 \ch 0 }[0,0,1,1,0,0]\\
			&+
			{15/8 \ch -5/8 }[0,0,1,0,1,0]
			+
			{-9/8 \ch 0 }[0,0,1,0,0,1]\\
			&+
			{0 \ch 3/2 }[0,0,0,1,1,0]
			+
			{0\ch  -1/4 }[0,0,0,1,0,1]\\
			&+
			{0 \ch 1}[0,0,0,0,1,1].
		\end{align*}
		Continuing  the computations following Algorithm B  we obtain 
		\begin{align*}
			\hat \alpha_1&=0;  \\
			\hat\alpha_2&=  \left(- a^{(1)}_{01} a^{(1)}_{10}  +  a^{(1)}_{01}  a^{(2)}_{10} , - a^{(1)}_{01}  a^{(2)}_{10}  + 
			a^{(2)}_{01}  a^{(2)}_{10}\right)^T; \\
			\hat\alpha_3&= \left(0, a^{(1)}_{10}  a^{(2)}_{02}  a^{(2)}_{10} + 2  a^{(2)}_{02}  (a^{(2)}_{10})^2\right)^T; \\
			\hat\alpha_4&= \left( (a^{(1)}_{01})^2 a^{(1)}_{10}  a^{(2)}_{10} +  a^{(1)}_{01} a^{(1)}_{10}  a^{(2)}_{01}  a^{(2)}_{10} - 2  (a^{(1)}_{01})^2  (a^{(2)}_{10})^2\right., \\
			& \left.- a^{(1)}_{01} a^{(1)}_{10}  a^{(2)}_{01}  a^{(2)}_{10} + 2  (a^{(1)}_{01})^2  (a^{(2)}_{10})^2 -   a^{(2)}_{01}  (a^{(2)}_{10})^2 \right)^T.
		\end{align*}
		so, the normal form up to level 4 is
		\be \label{nf4}
		\hat \alpha= \hat \alpha_0+ \hat \alpha_1+\hat \alpha_2+\hat \alpha_3+\hat \alpha_4.
		\ee

{\it Remark.} From Algorithm A it is obvious that in the case of equation \eqref{sys}, where $F(a,x)\in \vnj$ 
the normal form produced by Algorithm A is the same as the usual   Poincar\'e-Dulac   normal form
(produced e.g. algorithms of \cite{Mur,San,Wal}).  	Normal form \eqref{nf4} agrees with \eqref{nfg35}.
		However, computing up to  the seventh order we see that Algorithm B and the algorithms of \cite{Mur,San,Wal}
		produce normal forms  which differ in terms of order 7.

		A  great advantage of Algorithm B is that it allows parallel computations of terms of  normal forms, that is, 
		it is possible to compute one term of the normal form at once based on the following statement.
		
		\begin{pro} \label{Pr2}
			If for some  $\kappa\in \N_0^\ell$ we have  $\la L(\kappa), \lambda\ra=0$, then the coefficient  $\alpha_\kappa$ 
			in the normal  form  $\hat  \alpha$ is computed using only the  terms   $\alpha_\mu a^\mu$  such that 
			$a^\mu$ divides $a^\kappa$, i.e. $\mu_j \le \kappa_j$ for all $j=1,2,\dots, \ell$.  
		\end{pro}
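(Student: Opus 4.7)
The plan is to track the monomial $a^\kappa$ through every operation of Algorithm B and exploit the fact that all operations respect the componentwise partial order $\mu \le \kappa$ on $\N_0^\ell$.

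The crucial observation is the structure of the Lie bracket formula \eqref{lie_di}: for any two generalized vector fields $\hat\theta = \sum_\mu \theta_\mu a^\mu$ and $\hat\phi = \sum_\nu \phi_\nu a^\nu$, the coefficient of $a^\kappa$ in $[\hat\theta, \hat\phi]$ is a $\C$-linear combination of the vectors $\theta_\mu$ and $\phi_\nu$ indexed by pairs $(\mu,\nu)\in \N_0^\ell \times \N_0^\ell$ with $\mu + \nu = \kappa$. Since all entries of $\mu$ and $\nu$ are nonnegative, the equality $\mu + \nu = \kappa$ forces $\mu \le \kappa$ and $\nu \le \kappa$ componentwise. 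By iteration, the coefficient of $a^\kappa$ in any $(\ad \hat\eta_s)^i \hat\alpha_k$ depends only on the coefficients of $\hat\eta_s$ and $\hat\alpha_k$ at multi-indices $\le \kappa$.

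I would then proceed by induction on the step number $s$ of Algorithm B, establishing the following stronger invariant: \emph{after step $s$, for every $k \in \{0,1,\dots,m\}$ and every $\tau \le \kappa$, the coefficient of $a^\tau$ in the current $\hat\alpha_k$ is a polynomial in the initial coefficients $\{\alpha_\mu : \mu \le \kappa\}$ of \eqref{Aeq}.} The base case is immediate, since $\hat\alpha_0 = \lambda$ and the remaining $\hat\alpha_k$ start at their input values. For the inductive step, the definition \eqref{ez} of $\hat\eta_s$ scales each $a^\mu$-coefficient of the current $\hat\alpha_s$ by a scalar depending only on $\lambda$ and $L(\mu)$, so the invariant is transferred from $\hat\alpha_s$ to $\hat\eta_s$. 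Combining this with the locality of the Lie bracket discussed above, every term of $\hat\xi$ at multidegree $\tau \le \kappa$ is a polynomial in $\{\alpha_\mu : \mu \le \kappa\}$, and the invariant is preserved by the update in step (iii).

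Applying the invariant at $s = |\kappa|$ shows that the coefficient of $a^\kappa$ in $\hat\zeta_s$, which becomes the resonant coefficient $\alpha_\kappa$ in the final normal form, depends only on initial coefficients indexed by $\mu \le \kappa$. Subsequent steps at levels $> |\kappa|$ do not touch $\hat\alpha_{|\kappa|}$, so the claim follows. I do not expect a genuine obstacle; the argument is essentially bookkeeping, and the only mildly subtle point is that the scalars $1/\langle L(\mu), \lambda\rangle$ used in \eqref{ez} never vanish on the multi-indices that actually arise, which is built into the definition of $\hat\eta_s$.
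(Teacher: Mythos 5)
Your proposal is correct and follows exactly the route the paper intends: the paper's own proof is a single sentence pointing to the Lie bracket formula \eqref{lie_di} and the structure of Algorithm B, and your argument is precisely the fleshed-out version of that observation (the key point being that $\mu+\nu=\kappa$ with $\mu,\nu\in\N_0^\ell$ forces $\mu\le\kappa$ and $\nu\le\kappa$, propagated inductively through the steps of the algorithm). Your write-up is in fact more complete than the paper's.
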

		\begin{proof}
			The correctness of the statements follows from \eqref{lie_di} and Algorithm B.
		\end{proof}
		
		Thus, in order to compute a coefficient $\alpha_\kappa$ of the normal form, we 
		first look for the set of $\mu$'s involved in the computation of  $\alpha_\kappa$ by means of Algorithm B. 
		Denote this set $\omega_\kappa$. The set   $\omega_\kappa$ can be found using, for instance,
		the following procedure:
		
		Let $|\kappa|=s$. Set $p=1$, $\tau_\kappa(s)=\{ \kappa \}.$  
		
		While $p<s$ do \\ \indent \indent
		set  $\tau_\kappa(s-p)  =\emptyset$; \\
		\indent \indent
		for $ \mu=(\mu_1, \dots, \mu_\ell  ):$ \\ 
		\indent \indent\indent for $i=1, \dots, \ell $:
		if $\mu_i-i\ge 0$ then  $\tau_\kappa(s-p)= \tau_\kappa(k)\cup \{\mu \}$;\\
		\indent \indent set $p=p+1$. 
		\medskip
		
		The output of the procedure is the sequence of the sets $\tau_\kappa(i)$, $i=1, \dots, s-1$, 
		where $\tau_\kappa(i)$ is a subset of elements of level $i$.  Then
		\begin{equation*}
			\tau_\kappa=\cup_{i=1}^{s-1} \tau_\kappa(i)
		\end{equation*} 
		is the   subset of $\N_0^\ell$ needed in the computation of $\alpha_\kappa$
		and, in order to compute the  $\alpha_\kappa$, one just uses Algorithm B, where the Lie brackets are computed 
		with $\omega_1$ and $\omega_2$ 
		in \eqref{lie_di} being subsets of $\tau_\kappa$.

		Thus, based on Proposition \ref{Pr2} and the procedure described above, one can easily parallelize normal form computations.

		\section*{Acknowledgments}
		The first author is supported by the Slovenian Research Agency (core research program P1-0288) and the second author is supported by the Slovenian Research Agency (core research program P1-0306).
		
		We acknowledge COST (European Cooperation in Science and Technology)
		actions CA15140 (Improving Applicability of Nature-Inspired Optimisation
		by Joining Theory and Practice (ImAppNIO)) and IC1406 (High-Performance Modelling and Simulation for Big Data Applications (cHiPSet))
		and Tomas Bata University in Zl\'in for accessing Wolfram Mathematica.

	\end{document}